\documentclass[reqno, 11pt]{amsart}

\usepackage{tikz}
\usepackage{mathrsfs}
\usepackage{amsthm}
\usepackage{amsmath,amssymb}
\usepackage[all,poly,knot]{xy}
\usepackage{hyperref}
\usepackage{amsfonts}
\usepackage{color} 
\usepackage{diagbox}
\usepackage{chemfig} 
\usepackage{tikz}
\usepackage[square,sort,comma,numbers]{natbib}

\newtheorem{thm}{Theorem}[section]

\newtheorem{cor-defi}[thm]{{Corollary-Definition}}
\newtheorem{lem}[thm]{{Lemma}}

\newtheorem{defi}[thm]{Definition}
\theoremstyle{remark}


\newcommand{\bD}{{\mathbb  D}}
\newcommand{\bE}{{\mathbb  E}}
\newcommand{\bF}{{\mathbb  F}}

\newcommand{\bZ}{{\mathbb  Z}}

\newcommand{\mB}{{\mathcal  B}}

\newcommand{\mD}{{\mathcal  D}}
\newcommand{\mE}{{\mathcal  E}}

\newcommand{\mL}{{\mathcal  L}}

\newcommand{\mO}{{\mathcal  O}}

\newcommand{\mU}{{\mathcal  U}}

\newcommand{\mX}{{\mathcal  X}}

\newcommand{\sU}{{\mathscr  U}}

\newcommand{\sX}{{\mathscr  X}}

\newcommand{\nc}{\newcommand}
\nc{\on}{\operatorname}

\nc{\Aut}  {{\on{\mathrm  {Aut}}}}
\nc{\End}  {{\on{\mathrm  {End}}}}
\nc{\Fil}  {{\on{\mathrm  {Fil}}}}
\nc{\Frac} {{\on{\mathrm  {Frac}}}}
\nc{\Gal}  {{\on{\mathrm  {Gal}}}}
\nc{\GL}   {{\on{\mathrm  {GL}}}}
\nc{\Gr}   {{\on{\mathrm  {Gr}}}}
\nc{\Hom}  {{\on{\mathrm  {Hom}}}}
\nc{\id}   {{\on{\mathrm  {id}}}}
\nc{\PGL}  {{\on{\mathrm  {PGL}}}}
\nc{\rank} {{\on{\mathrm  {rank}}}}
\nc{\rmd}  {{\on{\mathrm  {d}}}}
\nc{\Spec} {{\on{\mathrm  {Spec}}}}

\def\can {{\mathrm  {can}}}

\nc{\HDF}  {{\on{\mathcal  {HDF}}}}
\nc{\HIG}  {{\on{\mathcal  {HIG}}}}
\nc{\IC}   {{\on{\mathcal {IC}}}}
\nc{\MCF}  {{\on{\mathcal {MCF}}}}
\nc{\MCFa} {{\on{\mathcal {MCF}_{[0,a]}}}}
\nc{\MF}   {{\on{\mathcal {MF}}}}
\nc{\MFa}  {{\on{         \MF_{[0,a]}}}}
\nc{\MFaf} {{\on{         \MF_{[0,a],f}}}}
\nc{\MIC}  {{\on{\mathcal {MIC}}}} 
\nc{\MICa} {{\on{\mathcal {MIC}_{[0,a]}}}} 
\nc{\THDF} {{\on{\mathcal {THDF}}}}
\nc{\THDFa}{{\on{\mathcal {THDF}_{[0,a]}}}}
\nc{\TMF}  {{\on{\mathcal {TMF}}}}
\nc{\TMFa} {{\on{\TMF_{[0,a]}}}}
\nc{\TMFaf}{{\on{\TMF_{[0,a],f}}}}
\nc{\tMIC} {{\on{\widetilde{\mathcal{MIC}}}}}

\def\hR{{\widehat{R}}} 
\def\tmB{{\widetilde{\mB}}}
\def\mBR{{\mB_{R_\pi}}} 
\def\tmBR{{\widetilde{\mB}_{R_\pi}}}
\def\tnabla{{\widetilde{\nabla}}}
\def\tV{{\widetilde V}}
\def\tsX{{\widetilde{\sX}}}

\def\R{R}



\begin{document}

	\title[Base change of Twisted Higgs-de Rham flows over very ramified valuation rings]{Base change of twisted Fontaine-Faltings modules and Twisted Higgs-de Rham flows over very ramified valuation rings} 
	\author{Ruiran Sun}
	\author{Jinbang Yang}
	\author{Kang Zuo}
	
	\email{ruirasun@uni-mainz.de, yjb@mail.ustc.edu.cn, zuok@uni-mainz.de} 
	\address{Institut F\"ur Mathematic, Universit\"at Mainz, Mainz, 55099, Germany}
\thanks{This work was supported by SFB/Transregio 45 Periods, Moduli Spaces and Arithmetic of Algebraic
	Varieties of the DFG (Deutsche Forschungsgemeinschaft)  and also supported by  National Key Basic Research Program of China (Grant No. 2013CB834202).}  
	
	\date{}

	\maketitle
	
\begin{abstract}
In this short notes, we prove a stronger version of Theorem $0.6$ in our previous paper \cite{SYZ17}:
Given a smooth log scheme $(\mathcal{X} \supset \mathcal{D})_{W(\mathbb{F}_q)}$,  each stable twisted $f$-periodic logarithmic Higgs bundle $(E,\theta)$ over the closed fiber $(X \supset D)_{\mathbb{F}_q}$ will correspond to a $\mathrm{PGL}_r(\mathbb{F}_{p^f})$-crystalline representation of $\pi_1((\mathcal{X} \setminus \mathcal{D})_{W(\mathbb{F}_q)[\frac{1}{p}]})$ such that its restriction to the geometric fundamental group is absolutely irreducible.
\end{abstract}

\section{Introduction}

In the previous paper \cite{SYZ17}, we prove the following results
\begin{thm}[Theorem $0.6$ in \cite{SYZ17}]
Let $k$ be a finite field of characteristic $p$. Let $\mX$ be a smooth proper geometrically connected scheme over $W(k)$ together with a smooth log structure $\mD/W(k)$. Assume that there exists a semistable graded logarithmic Higgs bundle 
		$(E,\theta)/(\mX,\mD)_1$  with $r:=\mathrm{rank}( E) \leq p-1,$  discriminant $\Delta_H(E)=0$, $r$ and $\deg_H(E)$ are coprime. Let $\mX^o=\mX \setminus \mD$ and $K'=W(k\cdot\mathbb{F}_{p^f})[1/p]$. 	 Then, there exist a positive integer $f$ and a $\mathrm{PGL}_r(\mathbb{F}_{p^f})$-crystalline representation $\rho$ of $\pi_1(\mX^o_{K'})$, 
		which is  irreducible in $\mathrm{PGL}_r(\overline{\mathbb{F}}_p)$.  
\end{thm}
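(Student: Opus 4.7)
The plan is to apply the Higgs-de Rham flow machinery. Starting from the graded semistable logarithmic Higgs bundle $(E,\theta)$ on the closed fiber, I would use the smoothness of $(\mathcal{X},\mathcal{D})/W(k)$ together with the logarithmic inverse Cartier transform $C^{-1}$ of Ogus--Vologodsky to lift $(E,\theta)$ to a filtered de Rham bundle $(H,\nabla,\mathrm{Fil})$ on $(X,D)_2$, where the filtration comes from the grading of $(E,\theta)$; the associated graded of this filtered de Rham bundle produces a new graded Higgs bundle $(E_1,\theta_1)$, and iterating yields the Higgs-de Rham flow. The bound $r \le p-1$ guarantees that the grading width fits inside the range where $C^{-1}$ is defined, while the vanishing of $\Delta_H(E)$ together with the semistability hypothesis ensures that each iterate remains semistable; the coprimality $\gcd(r,\deg_H(E))=1$ then forces stable $=$ semistable throughout the orbit.

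Next I would produce a periodic orbit in the twisted sense. The moduli space of semistable graded Higgs bundles with fixed numerical invariants, taken modulo the natural action of $\mathrm{Pic}^{\circ}(X)$ absorbing line bundle twists, is of finite type over $\mathbb{F}_q$ and hence has only finitely many $\mathbb{F}_{q^m}$-points for each $m$. The Higgs-de Rham flow induces a self-map on this quotient moduli; by a pigeonhole argument, some iterate of $(E,\theta)$ recurs, and the length of the recurrence provides the integer $f$. Unwinding the quotient, this yields a twisted $f$-periodicity of $(E,\theta)$: the $f$-th iterate is isomorphic to $(E,\theta) \otimes L$ for some line bundle $L$ defined over $\mathbb{F}_{p^f}$ (after possibly enlarging $f$).

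Given such a twisted $f$-periodic Higgs bundle, I would invoke the twisted Fontaine--Faltings correspondence. A twisted $f$-periodic graded Higgs bundle glues, along its twisted periodicity isomorphism and across the Teichm\"uller levels, to a twisted Fontaine--Faltings module over $W(k\cdot\mathbb{F}_{p^f})$; Faltings' functor then attaches to this module a crystalline Galois representation of $\pi_1(\mathcal{X}^o_{K'})$. The line bundle twist $L$ is exactly the obstruction to producing a genuine $\mathrm{GL}_r$-representation, so one lands in $\mathrm{PGL}_r(\mathbb{F}_{p^f})$. Absolute irreducibility on the geometric fundamental group then follows from the stability: a nontrivial Galois-invariant subspace over $\overline{\mathbb{F}}_p$ would transport back under the correspondence to a Higgs subbundle destabilizing $(E,\theta)$.

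The hard part will be producing and controlling the twisted periodic orbit. The Higgs-de Rham flow is only a \emph{partial} self-map on the moduli — defined only on the locus where each iterate of the inverse Cartier transform remains semistable — and one must check that the numerical hypotheses keep the forward orbit of $(E,\theta)$ inside this locus. A secondary difficulty is matching the characteristic-$p$ twisted periodicity with compatible lifts to each $W_n(k)$-level, so that the resulting twisted Fontaine--Faltings module and its associated crystalline representation are well-defined independent of the choices. These compatibility checks are what force working with \emph{truncated} twisted Higgs-de Rham flows, and are the content of the technical lemmas developed in the body of the paper.
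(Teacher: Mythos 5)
Your outline reproduces, in essentially the same order and with the same key ingredients, the strategy that SYZ17 actually uses for Theorem~0.6: lift $(E,\theta)$ by the logarithmic inverse Cartier transform with the filtration coming from the grading, keep iterating (with $r\le p-1$ controlling the range of the filtration and semistability preserved along the flow), apply pigeonhole on a finite-type moduli modulo line-bundle twists to produce twisted $f$-periodicity, repackage the periodic flow as a twisted Fontaine--Faltings module with $\bF_{p^f}$-endomorphism structure, apply the functor $\bD^P$ to obtain a $\PGL_r(\bF_{p^f})$-valued crystalline representation, and read off irreducibility from stability using that the image of $\bD^P$ is closed under subobjects and quotients. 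The coprimality of $r$ and $\deg_H(E)$ forces semistable $=$ stable, which is what makes the destabilization argument bite. This is the same route as SYZ17, and the paper at hand does not reprove Theorem~0.6 — it only cites it — so your reconstruction is the appropriate comparison.

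There is, however, one genuine overreach in your last paragraph: you assert that ``absolute irreducibility on the geometric fundamental group then follows from the stability'' by transporting an invariant subspace back to a destabilizing Higgs subsheaf. That claim is \emph{not} Theorem~0.6, which only asserts irreducibility over $\overline{\bF}_p$ of the representation of the \emph{arithmetic} group $\pi_1(\mX^o_{K'})$; the geometric statement is exactly the strengthening that the present paper is written to prove, and it does not follow by the same one-line argument. The obstruction is that a $\pi_1(X^o_{\overline{K}_0})$-stable subspace is not a priori stable under the full arithmetic $\pi_1$, hence does not land in the image of $\bD^P$, which is a functor to $\Gal$-equivariant objects over $K_0$. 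To repair this one must first choose a finite extension $K/K_0$ over which the Galois action through $\rho\circ s$ (for a section $s$ from a rational point) becomes trivial, so that $\rho(\pi_1(X^o_K))=\rho(\pi_1(X^o_{\overline{K}_0}))$; but $K/K_0$ is in general ramified, so one then needs the entire Fontaine--Faltings and twisted Higgs--de Rham formalism over the PD-thickening $\mB_{W_\pi}$ (the functors $C^{-1}_{\sX_{\pi,1}}$, $\IC_{\sX_{\pi,1}}$, $\bD^P$ over $\sX_{\pi,1}$, the degree comparison $\deg(C^{-1}_{\sX_{\pi,1}}(\tV))=p\deg(\tV)$, etc.). That apparatus is precisely what this paper constructs; it is not a corollary of stability alone.
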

In the proof we use the so called \emph{twisted periodic Higgs-de Rham flow}, which is a variant of the periodic Higgs-de Rham flow defined by Lan, Sheng and Zuo in \cite{LSZ13a}.\\ 

Now we want to improve this result to a stronger version. The $\mathrm{PGL}_r(\mathbb{F}_{p^f})$-crystalline representation of $\pi_1((\mathcal{X} \setminus \mathcal{D})_{W(\mathbb{F}_q)[\frac{1}{p}]})$ corresponding to the stable Higgs bundle should have the following property: its restriction to the geometric fundamental group $\pi_1((\mathcal{X} \setminus \mathcal{D})_{\bar{\mathbb{Q}}_p})$ is absolutely irreducible. Here the "absolutely irreducible" means that the representation is still irreducible after extending the coefficient $\mathbb F_{p^f}$ to $\overline{\mathbb F}_p$.

We outline the proof as follows. Fix a $K_0$-point in $X_{K_0}$, one can pull back the representation $\rho$ to a representation of the galois group, whose image is finite. This finite quotient will give us a field extension $K/K_0$ such that the restriction of $\rho$ on $\mathrm{Gal}(\bar{K_0}/K)$ is trivial. That means $\rho(\pi_1(X_{K}^o))=\rho(\pi_1(X^o_{\overline{K}_0}))$. So it suffices to prove the irreducibility of $\rho$ on $\pi_1(X_{K}^o)$, which gives us the chance to apply the method of twisted periodic Higgs-de Rham flows as in \cite{SYZ17}. But the field extension $K/K_0$ is usually ramified. So we have to work out the construction in \cite{SYZ17} to the very ramified case.

\section{Base change of twisted Fontaine-Faltings modules and Twisted Higgs-de Rham flows over very ramified valuation rings}

\subsection{Notations in the case of $\mathrm{Spec}k$.} In this notes, $k$ will always be a perfect field of characteristic $p>0$. Let $\pi$ be a root of an Eisenstein polynomial
\[f(T)=T^e+\sum_{i=0}^{e-1}a_i T^i\]
of degree $e$ over the Witt ring $W=W(k)$. Denote $K_0=\Frac(W)=W[\frac1p]$ and $K=K_0[\pi]$, where $K_0[\pi]$ is a totally ramified extension of $K_0$ of degree $e$. Denote by $W_\pi=W[\pi]$ the ring of integers of $K$, which is a complete discrete valuation ring with maximal ideal $\pi W_\pi$ and the residue field $W_\pi/\pi W_\pi=k$. Denote by $W[[T]]$ the ring of formal power-series over $W$. Then 
\[W_\pi=W[[T]]/fW[[T]].\]
The PD-hull $\mB_{W_\pi}$ of $W_\pi$ is the PD-completion of the ring obtained by adjoining to $W[[T]]$ the divided powers $\frac{f^n}{n!}$. 
 More precisely
\[\mB_{W_\pi}=\left\{\left.\sum_{n=0}^{\infty}a_nT^n\in K_0[[T]] \right| a_n[n/e]!\in W \text{ and }a_n[n/e]!\rightarrow 0\right\}.\]
A decreasing filtration is defined on $\mB_{W_\pi}$ by the rule that $F^q(\mB_{W_\pi})$ is the closure of the ideal generated by divided powers $\frac{f^n}{n!}$ with $n\geq q$. Note that the ring $\mB_{W_\pi}$ only depends on the degree $e$ while this filtration depends on $W_\pi$ and $e$. One has 
\[\mB_{W_\pi}/\Fil^1\mB_{W_\pi}\simeq W_\pi. \]
There is a unique continuous homomorphism of $W$-algebra $\mB_{W_\pi}\rightarrow B^+(W_\pi)$ which sends $T$ to $[\underline{\pi}]$. Here $\underline{\pi} = (\pi,\pi^{\frac{1}{p}},\pi^{\frac{1}{p^2}},\dots) \in \varprojlim \bar{R}$. 
We denote
\[\tmB_{W_\pi}=\mB_{W_\pi}[\frac{f}{p}]\]
which is a subring of $K_0[[T]]$. The idea $(\frac{f}{p})$ induces a decreasing filtration $\Fil^\cdot \tmB_{W_\pi}$ such that
\[\tmB_{W_\pi}/\Fil^1\tmB_{W_\pi}\simeq W_\pi. \]
The Frobenius endomorphism on $W$ can be extended to an endomorphism $\varphi$ on $K_0[[T]]$, where $\varphi$ is given by $\varphi(T)=T^p$. Since $\varphi(f)$ is divided by $p$, we have $\varphi(\tmB_{W_\pi})\subset \mB_{W_\pi}$. Thus one gets two restrictions 
\[\varphi:\tmB_{W_\pi}\rightarrow \mB_{W_\pi} \text{ and } \varphi:\mB_{W_\pi}\rightarrow \mB_{W_\pi} .\]
Note that the ideal of $\mB_{W_\pi}$, generated by $\Fil^1\mB_{W_\pi}$ and $T$, is stable under $\varphi$. Then we have the following commutative diagram
\begin{equation}\label{diag:FrobLift}
\xymatrix{
\mB_{W_\pi}\ar[d]^{\varphi} \ar@{->>}[r]  &\mB_{W_\pi}/(\Fil^1\mB_{W_\pi},T)=k \ar[d]^{(\cdot)^p}\\
\mB_{W_\pi} \ar@{->>}[r] &\mB_{W_\pi}/(\Fil^1\mB_{W_\pi},T)=k \\
}
\end{equation}
Since $\varphi(\frac{f}{p})$ is invertible in $\mB_{W_\pi}$, we have $\varphi((\Fil^1\tmB_{W_\pi},T))\not\subset(\Fil^1\mB_{W_\pi},T)$. Hence the following diagram is not commutative
\begin{equation}\label{diag:FrobLift}
\xymatrix{
\tmB_{W_\pi}\ar[d]^{\varphi} \ar@{->>}[r]  &\tmB_{W_\pi}/(\Fil^1\tmB_{W_\pi},T)=k \ar[d]^{(\cdot)^p}\\
\mB_{W_\pi} \ar@{->>}[r] &\mB_{W_\pi}/(\Fil^1\mB_{W_\pi},T)=k\\
}
\end{equation}
\subsection{Base change in the small affine case.}

For a smooth and small $W$-algebra $\R$ (which means there exists an \'etale map  
	\[W[T_1^{\pm1},T_2^{\pm1},\cdots, T_{d}^{\pm1}]\rightarrow R,\]
 see \cite{Fal89}),  Lan-Sheng-Zuo constructed categories $\MIC(\R/p\R)$, $\tMIC(\R/p\R)$, $\MCF(\R/p\R)$ and $\MF(\R/p\R)$. 
A Fontaine-Faltings module over $\R/p\R$ is an object $(V,\nabla,\Fil)$ in $\MCF(\R/p\R)$ together with an isomorphism $\varphi: \widetilde{(V,\nabla,\Fil)}\otimes_{\Phi}\hR\rightarrow (V,\nabla)$ in $\MIC(\R/p\R)$, where $\widetilde{(\cdot)}$ is the Faltings' tilde functor.

We generalize those categories over the $W_\pi$-algebra $R_\pi=R\otimes_W{W_\pi}$. In general, there does not exist Frobenius lifting on the p-adic completion of $\hR_\pi$. We lift the absolute Frobenius map on $R_\pi/\pi R_\pi$ to a map $\Phi:\mBR\rightarrow \mBR$
\begin{equation}\label{diag:FrobLift}
\xymatrix{
 \mBR  \ar[d]^{\Phi} \ar@{->>}[r]  &  R_\pi/\pi R_\pi=\R/p\R \ar[d]^{(\cdot)^p}\\
\mBR \ar@{->>}[r]  &  R_\pi/\pi R_\pi=\R/p\R\\
}
\end{equation}
where $\mBR$ is the $p$-adic completion of $\mB_{W_\pi}\otimes_W \R$. This lifting is compatible with $\varphi:\mB_{W_\pi}\rightarrow \mB_{W_\pi}$.
Denote $\tmBR=\mBR[\frac{f}{p}]$. Then $\Phi$ can be extended to 
\[\Phi:\tmBR\rightarrow \mBR\]
uniquely, which is compatible with $\varphi:\tmB_{W_\pi}\rightarrow \mB_{W_\pi}$. 
The filtrations on $\mB_{W_\pi}$ and $\tmB_{W_\pi}$ induce filtrations on $\mBR$ and $\tmBR$ respectively, which satisfy
\[\mBR/\Fil^1\mBR\simeq \hR_\pi\simeq \tmBR/\Fil^1\tmBR.\]
\begin{lem}\label{lem:F^iB&F^itB}
Let $n<p$ be a natural number and let $b$ be an element in $F^n\mBR$. Then $\frac{b}{p^n}$ is an element in $F^n\tmBR$.
\end{lem}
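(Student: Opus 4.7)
The plan is to reduce the claim to a single computation on the topological generators $\frac{f^j}{j!}$ ($j\geq n$) of $F^n\mBR$, and then to isolate the one arithmetic input that forces the hypothesis $n<p$. Since $F^n\mBR$ is (by construction) the $p$-adic closure of the $\mBR$-ideal generated by those divided powers, and since $F^n\tmBR=(f/p)^n\tmBR$ is $p$-adically closed in $\tmBR$, it is enough to show $\frac{1}{p^n}\cdot\frac{f^j}{j!}\in F^n\tmBR$ for every $j\geq n$; for a general $b\in F^n\mBR$ one then approximates $b$ $p$-adically by finite $\mBR$-linear combinations of such generators and passes to the limit after dividing by $p^n$.

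The decisive rewriting is
\[
\frac{1}{p^n}\cdot\frac{f^j}{j!}\;=\;\frac{p^{j-n}}{j!}\left(\frac{f}{p}\right)^j\;=\;\left(\frac{p^{j-n}}{j!}\left(\frac{f}{p}\right)^{j-n}\right)\cdot\left(\frac{f}{p}\right)^n.
\]
Hence the lemma reduces to showing that the rational number $\frac{p^{j-n}}{j!}$ is $p$-integral, i.e.\ lies in $\bZ_{(p)}\subset W$, for every $j\geq n$. Once this is known, the inner bracket lies in $\tmBR$ and the whole product lies in $(f/p)^n\tmBR=F^n\tmBR$.

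The arithmetic content is therefore the inequality $v_p(j!)\leq j-n$ for all $j\geq n$ with $n<p$, and this is the only non-formal step. I would prove it using Legendre's formula $v_p(j!)=(j-s_p(j))/(p-1)$, where $s_p(j)$ denotes the base-$p$ digit sum of $j$; the desired bound becomes $j(p-2)+s_p(j)\geq n(p-1)$. At $j=n$ the assumption $n<p$ gives $s_p(n)=n$ and both sides equal $n(p-1)$, yielding equality; for $j\geq n+1$ the crude bound $s_p(j)\geq 1$ reduces the inequality to $(n+1)(p-2)+1\geq n(p-1)$, which simplifies exactly to $n\leq p-1$, i.e.\ to our hypothesis. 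This elementary digit-sum computation is the only real obstacle; everything else is bookkeeping with the $p$-adic topology on $\mBR$ and $\tmBR$.
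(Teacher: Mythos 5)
Your proof is correct, and the central step — rewriting $\frac{1}{p^n}\cdot\frac{f^j}{j!}$ as $\frac{p^{j-n}}{j!}\left(\frac{f}{p}\right)^j$ and reducing to the $p$-integrality of $\frac{p^{j-n}}{j!}$ — is the same manipulation that drives the paper's proof. The paper proceeds directly with convergent series: it writes $b=\sum_{i\geq n}a_i\frac{f^i}{i!}$ with $a_i\in\hR[[T]]$, $a_i\to 0$, observes that $\frac{b}{p^n}=\sum_{i\geq n}\frac{p^{i-n}a_i}{i!}\cdot\frac{f^i}{p^i}$, and then simply asserts ``the lemma follows'' without verifying that the coefficients $\frac{p^{i-n}a_i}{i!}$ really lie in $\hR[[T]]$. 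That verification is precisely the inequality $v_p(i!)\leq i-n$ for all $i\geq n$, which is where the hypothesis $n<p$ enters and which the paper leaves entirely to the reader. You isolate and prove exactly this point via Legendre's formula $v_p(j!)=(j-s_p(j))/(p-1)$, checking the boundary case $j=n$ (where $s_p(n)=n$ gives equality) and using $s_p(j)\geq 1$ for $j\geq n+1$ to reduce to $n\leq p-1$. So your write-up is not a different route so much as the same route with the missing arithmetic lemma supplied; the one cosmetic difference is that you phrase the reduction generator-by-generator and invoke $p$-adic closure of $F^n\tmBR$, while the paper manipulates the whole convergent sum at once — both are fine, and the convergence bookkeeping ($a_i\to0$ implies $a_i/p^n\to0$) is routine either way.
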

\begin{proof} Since the filtrations on $\mBR$ and $\tmBR$ are induced by those on  $\mB_{W_\pi}$ and $\tmB_{W_\pi}$ respectively, we have 
\[F^n\mBR=\left\{\left.\sum_{i\geq n}^{\infty}a_i\frac{f^i}{i!}\right| a_i\in \hR[[T]] \text{ and }a_i \rightarrow 0\right\},\]
and
\begin{equation}
\begin{split}
F^n\tmBR &=\left\{\left.\sum_{i\geq n}^{\infty}a_i\frac{f^i}{p^i}\right| a_i\in \mBR \text{ and }a_i=0 \text{ for } i\gg0\right\}\\
&=\left\{\left.\sum_{i\geq n}^{\infty}a_i\frac{f^i}{p^i}\right| a_i\in \hR[[T]] \text{ and }\frac{i!}{p^i}a_i\rightarrow 0 \right\}.\\
\end{split}
\end{equation} 
Assume $b=\sum_{i\geq n}a_i\frac{f^i}{i!}$ with $a_i\in \hR[[T]]$ and $a_i\rightarrow 0$, then 
\[\frac{b}{p^n}=\sum_{i\geq n} \frac{p^i a_i }{p^n i!}\cdot \frac{f^i}{p^i},\] 
and the lemma follows.
\end{proof}

\begin{lem}
There are isomorphisms of free $\hR_\pi$-modules of rank $1$   
\begin{equation}
\xymatrix@R=0mm{
\Gr^n \tmBR \ar[r]&  \Gr^n \mBR \ar[r] & \hR_\pi  \\
 \frac{f^n}{p^n}\ar@{|->}[r] & \frac{f^n}{n!}\ar@{|->}[r] & 1 
}
\end{equation}
\end{lem}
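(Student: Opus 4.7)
The plan is to show that both $\Gr^n \mBR$ and $\Gr^n \tmBR$ are free of rank $1$ over $\hR_\pi$ by constructing explicit ``leading coefficient'' maps from the expansions provided in the preceding lemma. Once both are identified with $\hR_\pi$, the map sending one chosen generator to the other is tautologically an $\hR_\pi$-linear isomorphism.

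Concretely, I would define $\psi_n \colon F^n \mBR \to \hR_\pi$ by writing $b = \sum_{i \geq n} a_i \frac{f^i}{i!}$ with $a_i \in \hR[[T]]$ and $a_i \to 0$, and setting $\psi_n(b) = a_n \bmod f$. The most delicate point is well-definedness: two different expansions of the same element of $\mBR \subset K_0[[T]] \otimes \hR$ may differ, so I must check that $a_n \bmod f$ does not depend on the expansion. Subtracting two expansions and isolating the lowest term forces $a_n - a_n' \in f \cdot (K_0[[T]] \otimes \hR) \cap \hR[[T]]$; since $f$ is Eisenstein, and in particular a distinguished polynomial in $T$, Weierstrass division in $\hR[[T]]$ yields $a_n - a_n' \in f \hR[[T]]$, as required. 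That $\psi_n$ kills $F^{n+1} \mBR$ is immediate. Conversely, if $a_n = f a_n'$, the identity $a_n \frac{f^n}{n!} = (n+1) a_n' \frac{f^{n+1}}{(n+1)!}$ lets one absorb the leading term into the $(n+1)$-st coefficient with convergence preserved (only one coefficient is modified), so $b \in F^{n+1} \mBR$. Surjectivity is clear from $a \cdot \frac{f^n}{n!} \in F^n \mBR$ for any lift $a$ of an element of $\hR_\pi$.

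The parallel argument for $\Gr^n \tmBR$ uses the expansion $b = \sum_{i \geq n} a_i \frac{f^i}{p^i}$ from the preceding lemma; the shift step now reads $a_n \frac{f^n}{p^n} = p \cdot a_n' \frac{f^{n+1}}{p^{n+1}}$ when $a_n = f a_n'$, and the convergence condition $\frac{i!}{p^i} a_i \to 0$ is again preserved by modifying a single coefficient. This produces an isomorphism $\Gr^n \tmBR \to \hR_\pi$ sending $\frac{f^n}{p^n}$ to $1$. Composing this with the inverse of the first isomorphism gives the claimed map $\Gr^n \tmBR \to \Gr^n \mBR$ identifying $\frac{f^n}{p^n}$ with $\frac{f^n}{n!}$; it is $\hR_\pi$-linear because both factors are (the $\hR_\pi$-action on each $\Gr^n$ descends from the multiplicative structure on the filtration, and the leading-coefficient map is manifestly $\hR_\pi$-linear). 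The main obstacle is really the Weierstrass-division step establishing well-definedness of $\psi_n$ -- this is what forces us to use the Eisenstein property of $f$ -- while the rest is routine bookkeeping with convergent divided-power series.
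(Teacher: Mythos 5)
The paper's own proof is literally ``This can be checked directly,'' so there is nothing to compare your route against; the only question is whether your verification is correct.

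Your strategy---a ``leading-coefficient'' map $\psi_n\colon F^n\mBR\to\hR_\pi$ whose kernel is $F^{n+1}\mBR$---is a sensible one, and the kernel-containment and surjectivity steps are fine. But the well-definedness argument, which you yourself identify as the crux, has a genuine gap. You reduce it to the inclusion
\[
f\cdot\bigl(K_0[[T]]\hat\otimes\hR\bigr)\cap\hR[[T]]\ \subset\ f\,\hR[[T]],
\]
claiming it follows from Weierstrass division. This inclusion is \emph{false}. Take $\hR=W$, $e=1$, $f=T-p$. Then $f^{-1}=-p^{-1}\sum_{m\ge 0}(T/p)^m\in K_0[[T]]$, so $1=f\cdot f^{-1}$ lies in $fK_0[[T]]\cap W[[T]]$; but $1\notin fW[[T]]$ (reduce mod $p$: $f\equiv T$ and $1$ is not divisible by $T$). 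The reason Weierstrass division doesn't give what you want is that $f$ is a \emph{unit} in $K_0[[T]]\hat\otimes\hR$ (its constant term $a_0\in pW\setminus p^2W$ becomes invertible once $p$ is inverted), so the factorization $a_n-a_n'=fg$ in the big ring places no integrality constraint on $g$; Weierstrass division in $\hR[[T]]$ only produces \emph{some} $q\in\hR[[T]]$, $r\in\hR[T]_{<e}$ with $a_n-a_n'=qf+r$, and nothing in your argument forces $g=q$ or $r=0$.

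The gap is repairable, but it needs an actual argument, not the bare intersection claim. Two standard routes: (1) Iteratively normalize the expansion. Given $\sum_{i\ge n}c_i\gamma_i(f)=0$ with $c_i\in\hR[[T]]$, $c_i\to 0$, do Weierstrass division $\tilde c_i=q_if+r_i$ with $r_i\in\hR[T]_{<e}$, then push the overflow into the next slot via $\tilde c_{i+1}:=c_{i+1}+(i+1)q_i$, using $f\gamma_i(f)=(i+1)\gamma_{i+1}(f)$. The accumulated factors $(i+1)(i+2)\cdots$ supply enough $p$-adic decay (via $v_p(i!/j!)$) to make $\tilde c_i$, hence $q_i$ and $r_i$, tend to $0$, so the process converges to $\sum r_i\gamma_i(f)=0$; freeness of the completed module on the $\gamma_i(f)$ then forces each $r_i=0$, and in particular $c_n\in f\hR[[T]]$. (2) More structurally: the uncompleted PD-envelope of $(\hR[[T]],(f))$ is free as an $\hR_\pi$-module, $D=\bigoplus_m\hR_\pi\,\gamma_m(f)$, because $f$ is a non-zero-divisor and $\hR_\pi$ is $p$-torsion-free; hence $F^nD=\bigoplus_{m\ge n}\hR_\pi\gamma_m(f)$, $\Gr^nD\cong\hR_\pi\gamma_n(f)$, and passing to the ($p$-adically) completed filtration preserves this. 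The same works for $\tmBR$ with $(f/p)^n$ in place of $\gamma_n(f)$, giving the chain of isomorphisms in the statement. Either way, the Eisenstein property of $f$ enters through Weierstrass division as you suspected, but you need the normalization/freeness input, not the intersection claim.
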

\begin{proof}This can be checked directly.
\end{proof}

Recall that $\mBR$ and $\tmBR$ are  $\hR$-subalgebras of $\hR(\frac{1}{p})[[T]]$. We denote by $\Omega^1_{\mBR}$ (resp. $\Omega^1_{\tmBR}$) the $\mBR$-submodule (resp. $\tmBR$-submodule) of $\Omega^1_{\hR(\frac{1}{p})[[T]]/W}$ generated by elements $\rmd b$, where $b\in \mBR$ (resp. $b\in \tmBR$).  There is a filtration on $\Omega^1_{\mBR}$ (resp. $\Omega^1_{\tmBR}$) given by
$F^n \Omega^1_{\mBR}=F^n \mBR \cdot \Omega^1_{\mBR}$ (resp. $F^n \Omega^1_{\tmBR}=F^n \tmBR \cdot \Omega^1_{\tmBR}$). One gets the following result directly by Lemma~\ref{lem:F^iB&F^itB}. 
\begin{lem}
Let $n<p$ be a natural number. Then $\frac{1}{p^n}F^n \Omega^1_{\mBR}\subset F^n \Omega^1_{\tmBR}$.
\end{lem}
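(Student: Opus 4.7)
The plan is to reduce the statement directly to the preceding Lemma~\ref{lem:F^iB&F^itB}, by unpacking the definition of the filtration on $\Omega^1_{\mBR}$ and $\Omega^1_{\tmBR}$. First I would record the obvious fact that $\Omega^1_{\mBR}\subset \Omega^1_{\tmBR}$ inside the ambient module $\Omega^1_{\hR(\frac{1}{p})[[T]]/W}$. Indeed, since $\tmBR=\mBR[\tfrac{f}{p}]$ contains $\mBR$, every $b\in \mBR$ is also an element of $\tmBR$, and consequently each generator $\rmd b$ of $\Omega^1_{\mBR}$ is also a generator of $\Omega^1_{\tmBR}$. So multiplication by elements of $\mBR$ respects this inclusion.

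Next, by definition $F^n\Omega^1_{\mBR}=F^n\mBR\cdot \Omega^1_{\mBR}$, so a typical element $\omega\in F^n\Omega^1_{\mBR}$ has the form $\omega=\sum_j c_j\eta_j$ with $c_j\in F^n\mBR$ and $\eta_j\in \Omega^1_{\mBR}$. Multiplying by $p^{-n}$ gives
\[
\frac{\omega}{p^n}\;=\;\sum_j \frac{c_j}{p^n}\cdot \eta_j.
\]
By Lemma~\ref{lem:F^iB&F^itB}, each $\tfrac{c_j}{p^n}$ lies in $F^n\tmBR$, and by the first step each $\eta_j$ lies in $\Omega^1_{\tmBR}$. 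Hence $\tfrac{\omega}{p^n}$ belongs to $F^n\tmBR\cdot \Omega^1_{\tmBR}=F^n\Omega^1_{\tmBR}$, which is exactly what was to be proved.

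I do not expect a real obstacle: the arithmetic heart of the statement is already contained in Lemma~\ref{lem:F^iB&F^itB}, and what remains is purely formal bookkeeping. The only point worth double-checking is that the two filtrations $F^\bullet\Omega^1_{\mBR}$ and $F^\bullet\Omega^1_{\tmBR}$ are defined consistently as $F^n(-)\cdot \Omega^1_{(-)}$ inside the common ambient module $\Omega^1_{\hR(\frac{1}{p})[[T]]/W}$, which is precisely how they are introduced in the paragraph preceding the lemma. With that verified, no convergence issues arise: we do not need to sum infinitely many generators, since the filtration is defined as a submodule generated by the product of the ring filtration with the whole module.
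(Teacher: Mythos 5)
Your proof is correct and fills in exactly the bookkeeping the paper elides when it says the lemma follows ``directly'' from Lemma~\ref{lem:F^iB&F^itB}: you use the inclusion $\mBR\subset\tmBR$ to get $\Omega^1_{\mBR}\subset\Omega^1_{\tmBR}$, write a typical element of $F^n\Omega^1_{\mBR}=F^n\mBR\cdot\Omega^1_{\mBR}$ as a finite sum $\sum_j c_j\eta_j$, and apply the previous lemma to each coefficient $c_j/p^n$. This is the intended argument, and your remark that the hypothesis $n<p$ is inherited precisely so that Lemma~\ref{lem:F^iB&F^itB} applies is the one point worth keeping explicit.
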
 

We have the following categories 
\begin{itemize}
\item $\MIC(\mBR/p\mBR)$: the category of free $\mBR/p\mBR$-modules with integrable connections.
\item $\tMIC(\tmBR/p\tmBR)$: the category of free $\tmBR/p\tmBR$-modules with integrable nilpotent $p$-connection.
\item $\MCFa(\mBR/p\mBR)$: the category of filtered free $\mBR/p\mBR$-modules equipped with integrable connections, which satisfy the Griffiths transversality, and each of these $\mBR/p\mBR$-modules admits a filtered basis $v_i$ of degree $q_i$, $0\leq q_i\leq a$.
\end{itemize} 
A Fontaine-Faltings module over $\mBR/p\mBR$ of weight $a$ ($0\leq a\leq p-2$) is an object $(V,\nabla,\Fil)$ in the category $\MCFa(\mBR/p\mBR)$ together with an isomorphism in $\MIC(\mBR/p\mBR)$
\[\varphi: \widetilde{(V,\nabla,\Fil)}\otimes_{\Phi}\mBR\rightarrow (V,\nabla),\] 
where $\widetilde{(\cdot)}:\MCF(\mBR/p\mBR) \rightarrow \tMIC(\tmBR/p\tmBR)$ is an analogue of the Faltings' tilde functor.
For an object $(V,\nabla,\Fil)$ in $\MCF(\mBR/p\mBR)$ with filtered basis $v_i$ (of degree $q_i$, $0\leq q_i\leq a$), $\tV$ is defined as a filtered free $\tmBR/p\tmBR$-module 
\[\tV=\bigoplus_i \tmBR/p\tmBR\cdot [v_i]_{q_i}\]
with filtered basis $[v_i]_{q_i}$ (of degree $q_i$, $0\leq q_i\leq a$). Informally one can view $[v_i]_{q_i}$ as ``$\frac{v_i}{p^{q_i}}$". 
Since $\nabla$ satisfies the Griffiths transversality, there are $\omega_{ij}\in F^{q_j-1-q_i}\Omega^1_\mBR$ satisfying
\[\nabla(v_j)=\sum_{i} v_i\otimes \omega_{ij}.\] 
Since $q_j-1-q_i<a\leq p-2$, $\frac{\omega_{ij}}{p^{q_j-1-q_i}}\in F^{q_j-1-q_i}\Omega^1_\tmBR$.
We define a $p$-connection $\tnabla$ on $\tV$  via 
\[\tnabla([v_j]_{q_j})=\sum_{i} [v_i]_{q_i}\otimes \frac{\omega_{ij}}{p^{q_j-1-q_i}}.\]
\begin{lem}
The $\tmBR/p\tmBR$-module $\tV$ equipped with the $p$-connection $\tnabla$ is independent of the choice of the filtered basis $v_i$ up to a canonical isomorphism.   
\end{lem}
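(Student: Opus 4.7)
The plan is to construct an explicit canonical isomorphism between the two tilde-constructions coming from two filtered bases, verify its compatibility with the $p$-connection, and then check a cocycle relation for three bases.

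First I would fix two filtered bases $\{v_j\}$ and $\{w_i\}$ of $(V,\Fil)$ with degrees $q_j,q_i'\in[0,a]$ and write the transition matrix as $v_j=\sum_i w_i\,a_{ij}$. Since both are filtered bases, $a_{ij}\in F^{\max(q_j-q_i',\,0)}\mBR/p\mBR$. For every pair $(i,j)$ I would define $c_{ij}\in\tmBR/p\tmBR$ playing the role of ``$a_{ij}/p^{q_j-q_i'}$'': when $q_j\geq q_i'$, set $c_{ij}:=a_{ij}/p^{q_j-q_i'}\in F^{q_j-q_i'}\tmBR/p\tmBR$, well-defined by Lemma~\ref{lem:F^iB&F^itB} (applicable since $q_j-q_i'\le a\le p-2$); when $q_j<q_i'$, set $c_{ij}:=p^{q_i'-q_j}a_{ij}\in\mBR\subset\tmBR$. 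The candidate map is $\Psi\colon[v_j]_{q_j}\mapsto\sum_i[w_i]_{q_i'}\,c_{ij}$, extended $\tmBR/p\tmBR$-linearly. Applying the same recipe to the inverse transition matrix yields a map $\Psi'$ in the opposite direction, and the identity $AA^{-1}=I$ together with the multiplicativity of the rescaling exponents forces $\Psi\circ\Psi'=\id$, so $\Psi$ is an isomorphism of filtered $\tmBR/p\tmBR$-modules.

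The main—and I expect the most delicate—step is to verify that $\Psi$ intertwines the two $p$-connections. Applying $\nabla$ to $v_j=\sum_i w_i\,a_{ij}$ and reorganising via the Leibniz rule yields the matrix identity $A\,\omega^{(v)}=\omega^{(w)}A+\rmd A$ on connection forms. A straightforward filtration check—combining $a_{ij}\in F^{q_j-q_i'}\mBR$, $\omega^{(v)}_{ij}\in F^{q_j-1-q_i}\Omega^1_\mBR$ and $\omega^{(w)}_{li}\in F^{q_i'-1-q_l'}\Omega^1_\mBR$—shows that each entry of $A\omega^{(v)}$, $\omega^{(w)}A$ and $\rmd A$ in position $(l,j)$ lies in $F^{q_j-1-q_l'}\Omega^1_\mBR$, so dividing entrywise by $p^{q_j-1-q_l'}$ lands in $F^{q_j-1-q_l'}\Omega^1_\tmBR$ by the one-form analogue of Lemma~\ref{lem:F^iB&F^itB} stated just above. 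Using the $p$-connection Leibniz rule $\tnabla(fv)=f\tnabla(v)+pv\otimes\rmd f$, a direct computation reduces $\Psi\circ\tnabla_{\{v\}}=\tnabla_{\{w\}}\circ\Psi$ to exactly this rescaled matrix identity, the exponents matching via the short telescoping $q_j-1-q_l'=(q_j-q_i')+(q_i'-1-q_l')=(q_i-q_l')+(q_j-1-q_i)$.

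Finally, given a third filtered basis $\{u_l\}$ with transition matrices $A,B$ and composite $C=BA$, the multiplicativity $p^{q_l''-q_j}=p^{q_l''-q_i'}\cdot p^{q_i'-q_j}$ makes the rescaled matrix for $v\to u$ equal to the product of those for $v\to w$ and $w\to u$; hence $\Psi_{w\to u}\circ\Psi_{v\to w}=\Psi_{v\to u}$. This cocycle relation is what makes the system of $\Psi$'s canonical, completing the proof.
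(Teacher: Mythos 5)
Your proof is correct and follows essentially the same route as the paper's: write the two connection matrices $\omega^{(v)},\omega^{(w)}$, derive the change-of-basis relation $A\omega^{(v)}=\omega^{(w)}A+\rmd A$ from the Leibniz rule, and observe that conjugating/rescaling by the diagonal matrix of powers of $p$ (which the paper packages as $Q=\mathrm{diag}(p^{q_1},p^{q_2},\dots)$, so the rescaled transition matrix is $QAQ^{-1}$) turns this into exactly the relation needed for $\Psi$ to intertwine $\tnabla_{\{v\}}$ and $\tnabla_{\{w\}}$, using Lemma~\ref{lem:F^iB&F^itB} (and its one-form analogue) to see the rescaled entries land in $\tmBR$. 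The only things you add beyond the paper's proof are welcome bits of care: the explicit case split $q_j\gtrless q_i'$ that the paper absorbs into the convention $F^{n}=$ everything for $n\le 0$, and an explicit verification of the cocycle condition for three bases, which the paper leaves implicit in the phrase ``up to a canonical isomorphism'' but which is genuinely needed for the gluing later on. So this is the same argument, slightly more spelled out.
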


\begin{proof}
We write  $v=(v_1,v_2,\cdots)$ and $\omega=(\omega_{ij})_{i,j}$. Then
\[\nabla(v)=v\otimes \omega \text{ and } \tnabla([v])=[v]\otimes (pQ\omega Q^{-1}),\] 
where $Q=\mathrm{diag}(p^{q_1},p^{q_2},\cdots)$ is a diagonal matrix.
Assume that $v_i'$ is another filtered basis (of degree $q_i$, $0\leq q_i\leq a$) and $(\tV',\tnabla')$ is the corresponding $\tmBR/p\tmBR$-module equipped with the $p$-connection. Similarly, we have
\[\nabla(v')=v'\otimes \omega' \text{ and } \tnabla([v'])=[v']\otimes (pQ\omega' Q^{-1}),\] 
Assume $v_j'=\sum_i a_{ij}v_i$ ($a_{ij}\in F^{q_j-q_i}\mBR$). Then $A=(a_{ij})_{i,j}\in \GL_{\rank(V)}(\mBR)$ and 
$QAQ^{-1}=\left(\frac{a_{ij}}{p^{q_j-q_i}}\right)_{i,j}\in \GL_{\rank(V)}(\tmBR)$. 
We construct an isomorphism from $\tV'$ to $\tV$ by
\[\tau([v'])=[v] \cdot (QAQ^{-1}),\]
where $[v]=([v_1]_{q_1},[v_2]_{q_2},\cdots)$ and $[v']=([v_1']_{q_1},[v_2']_{q_2},\cdots)$. Now we only need to check that $\tau$ preserve the $p$-connections. Indeed, 
\begin{equation}
\tau\circ\tnabla'([v']) =[v]\otimes (QAQ^{-1}\cdot p Q\omega' Q^{-1})=[v]\otimes (pQ\cdot A\omega'\cdot Q^{-1})
\end{equation}
and 
\begin{equation}
\begin{split}
\tnabla\circ\tau([v']) & =\tnabla([v]QAQ^{-1})\\
& =[v]\otimes (pQ\omega Q^{-1}\cdot QAQ^{-1} +p\cdot Q \rmd A Q^{-1})\\
&=[v]\otimes (pQ\cdot (\omega A  + \rmd A)\cdot Q^{-1})\\
\end{split}
\end{equation}
Since $\nabla(v')=\nabla(vA)=v\otimes \rmd A + v\otimes \omega A=v'\otimes (A^{-1}\rmd A+ A^{-1}\omega A)$, we have $\omega'=A^{-1}\rmd A+ A^{-1}\omega A$ by definition. Thus $\tau\circ\tnabla'=\tnabla\circ\tau$.
\end{proof}
The functor 
\[-\otimes_\Phi\mBR : \tMIC(\tmBR/p\tmBR) \rightarrow \MIC(\mBR/p\mBR)\]
 is induced by base change under $\Phi$. Note that the connection on $(\tV,\tnabla)\otimes_\Phi\mBR$ is given by
\[\rmd + \frac{\rmd \Phi}{p}(\Phi^*\tnabla)\]
Denote by $\MFa(\mBR/p\mBR)$ the category of all Fontaine-Faltings module over $\mBR/p\mBR$ of weight $a$. 

\begin{lem}\label{lem:coeffExt} We have the following commutative diagram by extending the coefficient ring from $\R$ to $\mBR$ (or $\tmBR$)
\begin{equation*}
\xymatrix@C=1cm{
\MCF(\R/p\R) \ar[r]^{\widetilde{(\cdot)}}  \ar[d]^{-\otimes_R \mBR} 
& \tMIC(\R/p\R)\ar[r]^{-\otimes_{\Phi} {\R}}  \ar[d]^{-\otimes_R \tmBR} 
& \MIC(\R/p\R)  \ar[d]^{-\otimes_R \mBR} \\
\MCF(\mBR/p\mBR) \ar[r]^{\widetilde{(\cdot)}}  
& \tMIC(\tmBR/p\tmBR)\ar[r]^{-\otimes_\Phi\mBR}  
& \MIC(\mBR/p\mBR) \\ 
}
\end{equation*} 
In particular, we get a functor from the category of Fontaine-Faltings modules over $R/pR$ to that over $\mBR/p\mBR$  
\[\MFa(\R/p\R)\rightarrow \MFa(\mBR/p\mBR).\] 
\end{lem}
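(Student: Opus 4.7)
The plan is to verify the commutativity of the two squares separately at the level of objects, since in each case both the horizontal and vertical functors are defined by explicit formulas on a chosen filtered basis; functoriality on morphisms then follows immediately from naturality of these formulas. The last assertion about Fontaine--Faltings modules will then be a formal consequence: the Frobenius isomorphism $\varphi$ defining an object of $\MFa(\R/p\R)$ is an isomorphism in $\MIC(\R/p\R)$, and applying the (commuting) base-change functor to both sides yields an isomorphism in $\MIC(\mBR/p\mBR)$ between the required two objects, which supplies the $\mBR$-level Frobenius structure.

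For the left square, start with $(V,\nabla,\Fil)\in \MCF(\R/p\R)$ and a filtered basis $\{v_i\}$ of degrees $q_i\in [0,a]$. Then $\{v_i\otimes 1\}$ is a filtered basis of $V\otimes_R\mBR$ of the same degrees, since $\Fil^q\mBR$ contains $\Fil^q R\otimes_R \mBR$ by construction of the filtration on $\mBR$. Writing $\nabla(v_j)=\sum_i v_i\otimes \omega_{ij}$ with $\omega_{ij}\in F^{q_j-1-q_i}\Omega^1_R$, the induced connection on $V\otimes_R\mBR$ is determined by the same matrix $\omega=(\omega_{ij})$ viewed inside $\Omega^1_\mBR$. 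By the explicit construction preceding Lemma 2.3, both $\widetilde{V\otimes_R\mBR}$ and $\widetilde{V}\otimes_R\tmBR$ have the same filtered basis $\{[v_i]_{q_i}\otimes 1\}$ and the same $p$-connection matrix $pQ\omega Q^{-1}$ with $Q=\mathrm{diag}(p^{q_i})$; hence the two agree up to the canonical isomorphism provided by Lemma 2.3. This establishes commutativity of the left square.

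For the right square, the key point is that the Frobenius lift $\Phi:\mBR\to \mBR$ was constructed precisely so as to extend the chosen Frobenius lift $\phi_R:\hR\to\hR$ used to define $-\otimes_\Phi\hR$ in \cite{LSZ13a}, via the identification $\mBR/(\Fil^1\mBR,T)=\R/p\R$. Consequently, given $(\tV,\tnabla)\in\tMIC(\R/p\R)$, the two $\mBR/p\mBR$-modules $(\tV\otimes_\Phi \hR)\otimes_R\mBR$ and $(\tV\otimes_R\tmBR)\otimes_\Phi \mBR$ are canonically identified as underlying modules, and the connections agree because both are given by the same formula $\rmd+\frac{\rmd\Phi}{p}(\Phi^*\tnabla)$ (the expression $\frac{\rmd\Phi}{p}$ makes sense on $\tmBR$ and restricts to the corresponding expression on $\hR$ thanks to Lemma 2.2 applied to $\Phi(\Fil^1\tmBR)\subset p\mBR$).

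Finally, for an object $(V,\nabla,\Fil,\varphi)\in\MFa(\R/p\R)$, form the base-changed triple $(V\otimes_R\mBR,\nabla\otimes 1,\Fil\otimes 1)$ in $\MCFa(\mBR/p\mBR)$ and transport $\varphi$: the commutativity of the full diagram produces an isomorphism in $\MIC(\mBR/p\mBR)$ between $\widetilde{(V\otimes_R\mBR,\nabla\otimes 1,\Fil\otimes 1)}\otimes_\Phi\mBR$ and $(V\otimes_R\mBR,\nabla\otimes 1)$, which by definition is a Fontaine--Faltings module structure of weight $a$ on the base change. I expect the main technical nuisance to be bookkeeping around the canonical isomorphism in Lemma 2.3 --- one must check that the identifications used in commuting the tilde functor with base change are compatible with change of filtered basis, so that the functor is well-defined and not merely defined on ``bases-picked'' objects; this reduces to the naturality of the matrix-level formulas $QAQ^{-1}$ appearing in the proof of Lemma 2.3, now applied over $\mBR$ instead of $\R$.
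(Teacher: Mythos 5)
The paper states Lemma~\ref{lem:coeffExt} without proof, so there is no authorial argument to compare against; it is evidently regarded as a routine consequence of the definitions. Your approach — check commutativity of each square on a fixed filtered basis by comparing explicit matrix formulas, then transport the Frobenius structure formally — is the natural one and is essentially correct. Two calibration points are worth flagging. First, in the left square the argument becomes transparent once you observe that Griffiths transversality over $R/pR$ already forces $\omega_{ij}=0$ unless $q_j-1-q_i\le 0$, so that after base change the same $\omega_{ij}$ automatically lie in $F^{q_j-1-q_i}\Omega^1_{\mBR}$ and the formal ``division by $p^{q_j-1-q_i}$'' is in fact multiplication by $p^{q_i-q_j+1}$; this shows the two $p$-connection matrices literally coincide, with no subtle divisibility to verify. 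Second, for the right square you assert that $\Phi:\mBR\to\mBR$ ``extends the chosen Frobenius lift $\phi_R$'' — but the paper only imposes compatibility of $\Phi$ with $\varphi$ on $\mB_{W_\pi}$, not compatibility with a fixed lift on $\hR$. For the square to commute on the nose you should either specify that $\Phi$ is chosen as $\phi_R\hat\otimes\varphi$, or appeal to Taylor-formula independence of the resulting category from the choice of Frobenius lift (the mechanism of Theorem~\ref{thm:gluingFFmod}), accepting commutativity only up to canonical isomorphism. Also note that the $p$-divisibility of $\rmd\Phi$ that you invoke is the content of Lemma~\ref{lem:FrobLift}~iii) (which appears after the present lemma), or follows directly from $\Phi(x)\equiv x^p\pmod p$; it is not what Lemma~\ref{lem:F^iB&F^itB} proves. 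None of these affect the substance of the argument.
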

Those categories of Fontaine-Faltings modules are independent of the choice of the Frobenius lifting by the Taylor formula.
\begin{thm}\label{thm:gluingFFmod} For any two choices of $\Phi_{\mBR}$ there is an equivalence between the corresponding categories $\MFa(\mBR/p\mBR)$ with different $\Phi_{\mBR}$. These equivalences satisfy the obvious cocycle condition. Therefore, $\MFa(\mBR/p\mBR)$ is independent of the choice of $\Phi_{\mBR}$ up to a canonical isomorphism.
\end{thm}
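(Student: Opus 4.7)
The plan is to adapt Faltings' Taylor-formula argument (\cite{Fal89}, see also \cite{LSZ13a}) from the unramified base to the PD-ring $\mBR$. Let $\Phi_1,\Phi_2:\mBR\to\mBR$ be two Frobenius liftings, each compatible with the fixed $\varphi:\mB_{W_\pi}\to\mB_{W_\pi}$. First I would analyze the difference $\xi:=\Phi_1-\Phi_2$. Because both $\Phi_j$ reduce to the same $p$-th power map modulo $p$ and both extend the same $\varphi$ in the $W_\pi$-direction (so they agree on $T$), $\xi$ takes values in $p\mBR$ on the \'etale coordinates $T_i$ of $R$; writing $\Phi_1(T_i)-\Phi_2(T_i)=pz_i$ with $z_i\in\mBR$ furnishes the input for the Taylor series.

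Next, for a Fontaine-Faltings module $(V,\nabla,\Fil,\varphi_1)$ relative to $\Phi_1$, I would define $\varphi_2:=\varphi_1\circ\tau_{12}$, where $\tau_{12}$ is the canonical comparison isomorphism
\[
\tau_{12}:\widetilde{(V,\nabla,\Fil)}\otimes_{\Phi_2}\mBR\xrightarrow{\sim}\widetilde{(V,\nabla,\Fil)}\otimes_{\Phi_1}\mBR
\]
given, on a filtered basis $[v]$ of $\tV$, by the formal Taylor expansion
\[
\tau_{12}\bigl([v]\otimes_{\Phi_2}1\bigr)=\sum_{\underline n}\frac{z^{\underline n}}{\underline n!}\cdot\Phi_1^{*}\bigl(\tnabla_{\partial}^{\underline n}[v]\bigr).
\]
The point is that the factors of $p$ arising from $\xi(T_i)=pz_i$ combine with the normalization $\tfrac{\rmd\Phi_1}{p}$ built into the pullback under $\Phi_1$ and with $\tnabla$ being a \emph{$p$-connection}; this is precisely why the tilde-functor is the right intermediate, and the series lives naturally on the $\tmBR$-side.

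The main obstacle is verifying $p$-adic convergence of this series modulo $p$ in the filtered PD-setting. In the unramified case of \cite{Fal89,LSZ13a} convergence is automatic because $\nabla$ is topologically nilpotent on $\hR$; on $\mBR$ one must also track the filtration shift provided by Lemma~\ref{lem:F^iB&F^itB}. Each application of $\tnabla$ lowers the Hodge weight by one by Griffiths transversality, and the weight bound $a\le p-2$ together with the rescaling between $F^{\bullet}\mBR$ and $F^{\bullet}\tmBR$ (each step gaining a factor $p$ that cancels against $\tfrac{1}{n!}$ in the low-weight regime, and going into $F^{\ge p-1}$ in the high-weight regime where the divided-power denominators absorb the remaining factors of $p$) forces all but finitely many terms to vanish mod $p$. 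This is the genuinely new content beyond the unramified proof and is where the hypothesis $a\le p-2$ is used.

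Finally, the cocycle condition $\tau_{13}=\tau_{12}\circ\tau_{23}$ for three liftings follows from a direct manipulation of Taylor expansions, as in \cite{Fal89}: the identity $(\Phi_1-\Phi_3)(T_i)=(\Phi_1-\Phi_2)(T_i)+(\Phi_2-\Phi_3)(T_i)$ together with integrability of $\nabla$ (equivalently, flatness of $\tnabla$, so that the iterated derivatives along different directions commute) yields the multinomial identity needed to match the two sides term by term. Since each $\tau_{ij}$ is the identity on the underlying object $(V,\nabla,\Fil)\in\MCFa(\mBR/p\mBR)$, the resulting system of functors gives the asserted canonical equivalences and $\MFa(\mBR/p\mBR)$ is intrinsically defined.
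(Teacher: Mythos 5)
Your proposal is correct and takes exactly the approach the paper itself merely invokes: the paper gives no explicit proof, only the remark preceding the theorem that the independence follows ``by the Taylor formula,'' and your argument is the natural elaboration of that remark in the ramified setting.

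Two small comments. First, the convergence discussion is more elaborate than necessary: the category $\tMIC(\tmBR/p\tmBR)$ is defined with \emph{nilpotent} $p$-connections, so mod $p$ the Taylor series $\sum_{\underline n}\tnabla_{\partial}^{\underline n}([v])\otimes_{\Phi_1}\tfrac{z^{\underline n}}{\underline n!}$ is automatically a finite sum and the delicate bookkeeping between $F^{\bullet}\mBR$ and $F^{\bullet}\tmBR$ via Lemma~\ref{lem:F^iB&F^itB} is not needed for convergence, though it is of course the correct mechanism explaining why the weight bound $a\le p-2$ is imposed elsewhere. Second, to conclude that $\tau_{12}$ really is an isomorphism in $\MIC(\mBR/p\mBR)$ you should record (as in \cite{Fal89} and \cite{LSZ13a}) that the Taylor map intertwines the two connections $\rmd+\tfrac{\rmd\Phi_1}{p}(\Phi_1^*\tnabla)$ and $\rmd+\tfrac{\rmd\Phi_2}{p}(\Phi_2^*\tnabla)$; this is the same computation that drives Lemma~\ref{gluingFlatBundle} and is where the integrability of $\tnabla$ enters, alongside its role in the cocycle identity you already noted.
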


\begin{defi}\label{lem:defiFunctorD}
For an object $(V,\nabla,\Fil,\varphi)$ in $\MFa(\mBR/p\mBR)$, denote
\[\bD(V,\nabla,\Fil,\varphi)=\Hom_{B^+(R),\Fil,\varphi}(V\otimes_{\mBR} B^+(R),B^+(R)/pB^+(R)).\]  
\end{defi}
The proof of Theorem~2.6 in~\cite{Fal89} works in this context. we can define an adjoint functor $\bE$ of $\bD$ as 
\[\bE(L)= \varinjlim \{V\in \MFa(\mBR/p\mBR)\mid L\rightarrow \bD(V)\}.\]
The proof in page~41 of~\cite{Fal89} still works. Thus we obtain:
\begin{thm} $i).$ The homomorphism set $\bD(V,\nabla,\Fil,\varphi)$ is an $\bF_p$-vector space with a linear $\mathrm{Gal}(\overline{R}_K/R_K)$-action whose rank equals to the rank of $V$.\\
$ii).$ The functor $\bD$ from $\MFa(\mBR/p\mBR)$ to the category of $W_n(\bF_p)$-$\mathrm{Gal}(\overline{R}_K/R_K)$-modules is fully faithful and its image on objects is closed under subobjects and quotients. 
\end{thm}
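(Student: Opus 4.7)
The plan is to transcribe Faltings' proof of \cite[Theorem~2.6]{Fal89} into our ramified setting, checking at each step that his arguments still work when the base ring $\hR$ is replaced by $\mBR$ (and $\tmBR$ where necessary). The preparatory lemmas in this section (the explicit description of $\mBR$ as a PD-envelope, the Frobenius lift $\Phi$ on $\mBR$, the compatibility diagram of Lemma~\ref{lem:coeffExt}, and the gluing Theorem~\ref{thm:gluingFFmod}) should provide exactly the structural data needed to run Faltings' arguments verbatim.

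For part $(i)$, the $\bF_p$-linearity is immediate from the $\bF_p$-module structure of the target $B^+(R)/pB^+(R)$, which is preserved by the filtration and Frobenius conditions defining the Hom. The $\mathrm{Gal}(\overline{R}_K/R_K)$-action comes from its action on $B^+(R)$, which commutes with the filtration and the Frobenius; the source $V\otimes_{\mBR} B^+(R)$ receives this action only through the $B^+(R)$-factor via the structure map $\mBR \to B^+(R)$ sending $T\mapsto[\underline\pi]$. For the rank equality, I would reduce to a local situation where $\hR$ is a complete DVR and compute directly using the explicit filtered basis, invoking Fontaine-Laffaille-type structure theory (which is available under the weight bound $a\leq p-2$). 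The new ingredient compared to the unramified case is that the map $\mBR \to B^+(R)$ absorbs the ramification $\pi$ as the divided-power element $[\underline\pi]$, so the same dimension count carries over.

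For part $(ii)$, I would define the adjoint functor $\bE$ as in the excerpt and verify that the two natural adjunction morphisms $V\to\bE\bD(V)$ and $\bD\bE(L)\to L$ are isomorphisms. Once these are established, fully faithfulness of $\bD$ follows by a formal argument from the adjunction, and the closure of its image under subobjects and quotients is obtained by transporting such structures across $\bE$, using that both categories have natural exact structures with respect to which the functors are well-behaved. The verification of the adjunction isomorphisms is a direct computation reducing to statements about $B^+(R)/pB^+(R)$ as a module over $\mBR/p\mBR$.

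The main obstacle I anticipate is the rank computation in part $(i)$ together with the isomorphism of adjunction maps in part $(ii)$: both ultimately rest on the control of torsion and of the divided-power filtration of $B^+(R)$ relative to our new coefficient ring $\mBR$. The potentially worrying point is that $\mBR$, unlike Faltings' original $\hR$, itself carries divided powers from $\mB_{W_\pi}$, so one must check that these are absorbed correctly by the structure map to $B^+(R)$ rather than producing extra Hom-classes. Fortunately the divided powers in $\mBR$ are exactly the divided powers $\frac{f^n}{n!}$ that map to the divided powers of $[\underline\pi]-\pi$ in $B^+(R)$, so Faltings' computation of $\Hom_{B^+(R),\Fil,\varphi}$ yields the same count. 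Combined with the compatibility in Lemmas~\ref{lem:F^iB&F^itB} and \ref{lem:coeffExt}, this gives all the algebraic input needed for Faltings' argument on page~41 of \cite{Fal89} to go through without modification.
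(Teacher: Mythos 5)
Your proposal takes essentially the same approach as the paper: the paper's proof consists only of the remark that the proof of Theorem~2.6 in \cite{Fal89} carries over to the ramified coefficient ring $\mBR$, together with the definition of the adjoint functor $\bE(L)=\varinjlim\{V\in\MFa(\mBR/p\mBR)\mid L\to\bD(V)\}$ and an appeal to the argument on page~41 of \cite{Fal89}. Your write-up is a more detailed unpacking of exactly those same steps (Galois action via $B^+(R)$, rank count via the filtered basis, adjunction between $\bD$ and $\bE$), with the added, correct observation about where the divided powers $\frac{f^n}{n!}$ land under $T\mapsto[\underline\pi]$.
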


\subsection{ Categories and Functors on proper smooth variety over very ramified ring $W_\pi$} 
Let $\mX$ be a smooth proper $W$-scheme and $\mX_\pi=\mX\otimes_{W}W_\pi$.
Let $\sX_\pi$ be the formal completion of $\mX\otimes_{W}\mB_{W_\pi}$ and $\tsX_\pi$ be the formal completion of $\mX\otimes_W{\tmB_{W_\pi}}$. Then $\sX_\pi$ is an infinitesimal thickening of $\mX_\pi$ and the ideal defining $\mX_\pi$ in $\sX_\pi$ has a nilpotent PD-structure which is compatible with that on $F^1(\mB_{W_\pi})$ and $(p)$
\begin{equation}
\xymatrix@R=2mm{
\mX_\pi \ar[rr] \ar[dd] \ar[dr] && \sX_\pi \ar[dd]|(0.5)\hole \ar[dr]&\\
& \widetilde{\sX_\pi}  \ar[rr] \ar[dd] && \mX \ar[dd]\\
\Spec W_\pi  \ar[rr]|(0.5)\hole
 \ar[dr] && \Spec \mB_{W_\pi} \ar[dr] &\\
& \Spec\widetilde{\mB}_\pi \ar[rr] && \Spec W \ .\\
}
\end{equation}
 Let $\{\mU_{i}\}_i$ be a covering of small affine open subsets of $\mX$. By base change, we get a covering $\{\sU_i=\mU_{i}\times_\mX \sX_\pi\}_i$ of $\sX_\pi$ and a covering $\{\widetilde{\sU}_i=\mU_{i}\times_\mX \tsX_\pi\}_i$ of $\widetilde{\sX_\pi}$. For each $i$, we denote $R_{i}=\mO_{\mX_\pi}(\mU_{i}\times_\mX \mX_\pi)$. Then $\mB_{R_i}=\mathcal O_{\sX_\pi}(\sU_i)$ and 
$\tmB_{R_i}=\mathcal O_{\widetilde{\sX_\pi}}(\widetilde{\sU}_i)$ are the coordinate rings. Fix a Frobenius-lifting $\Phi_i:\tmB_{R_i}\rightarrow \mB_{R_i}$, one gets  those categories of Fontaine-Faltings modules
\[\MFa(\mB_{R_i}/p\mB_{R_i}).\]
By the Theorem~\ref{thm:gluingFFmod}, these categories are glued into one category. Moreover those underlying modules are glued into a bundle over $\sX_{\pi,1}=\sX_\pi\otimes_{\bZ_p}\bF_p$. We denote this category by $\MFa(\sX_{\pi,1})$. 

\subsubsection{Inverse Cartier functor and a description of $\MFa(\sX_{\pi,1})$ via Inverse Cartier functor} Let  $\overline{\Phi}: \mBR/p\mBR \rightarrow \mBR/p\mBR$ be the $p$-th power map. Then we get the following lemma directly.
\begin{lem}\label{lem:FrobLift}
Let $\Phi:\mBR\rightarrow \mBR$ and $\Psi:\mBR\rightarrow \mBR$ be two liftings of $\overline{\Phi}$ which are both compatible with the Frobenius map on $\mB_{W_\pi}$. 
\begin{itemize}
\item[i).] Since $\varphi(f)$ is divided by $p$, we extend $\Phi$ and $\Psi$ to maps on $\tmBR$ via $\frac{f^n}{p}\mapsto \left(\frac{\varphi(f)}{p^n}\right)^n$  uniquely;
\item[ii).] the difference $\Phi-\Psi$ on $\tmBR$ is still divided by $p$;
\item[iii).] the differentials $\rmd \Phi : \Omega^1_{\tmBR}\rightarrow \Omega^1_{\mBR}$ and $ \rmd \Psi : \Omega^1_{\tmBR}\rightarrow \Omega^1_{\mBR}$ are divided by $p$.
\end{itemize}
\end{lem}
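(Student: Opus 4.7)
\emph{Plan.} All three parts rest on the observation that $\tmBR = \mBR[f/p]$ is generated over $\mBR$ by the single element $\xi := f/p$ with defining relation $p\xi = f$, so any extension of $\Phi$ is determined by $\Phi(\xi)$ and a comparison of two extensions is controlled by their values on $\xi$.

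For (i), any extension of $\Phi$ is forced to satisfy $p\cdot \Phi(\xi) = \Phi(f) = \varphi(f)$ by compatibility with $\varphi$ on $\mB_{W_\pi}$. The divided-power structure of $\mB_{W_\pi}$ gives $f^p \in p\mB_{W_\pi}$, and combined with $\varphi \equiv (\cdot)^p \pmod p$ this yields $\varphi(f) \in p\mB_{W_\pi}$; hence $\varphi(f)/p$ is a well-defined element of $\mBR$ and we set $\Phi(\xi) := \varphi(f)/p$. This is the unique extension (and the formula $\Phi(\xi^n) = (\varphi(f)/p)^n$ follows); the argument for $\Psi$ is identical. For (ii), both $\Phi$ and $\Psi$ restrict to $\varphi$ on $\mB_{W_\pi}$, so $\Phi(\xi) = \Psi(\xi) = \varphi(f)/p$. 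Writing a general element of $\tmBR$ as a convergent sum $\sum a_n \xi^n$ with $a_n \in \mBR$, one gets
\[(\Phi - \Psi)\Bigl(\sum_n a_n\xi^n\Bigr) = \sum_n \bigl(\Phi(a_n) - \Psi(a_n)\bigr)(\varphi(f)/p)^n \in p\mBR,\]
since each $\Phi(a_n) - \Psi(a_n) \in p\mBR$ by hypothesis.

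For (iii), the classical argument handles $\rmd a$ with $a \in \mBR$: writing $\Phi(a) = a^p + p\,h(a)$ with $h(a)\in\mBR$ gives $\rmd \Phi(\rmd a) = p(a^{p-1}\,\rmd a + \rmd h(a)) \in p\,\Omega^1_{\mBR}$. For the new generator $\rmd \xi$ one uses the Eisenstein form $f = T^e + \sum_{0 \leq i < e} a_i T^i$ with $a_i \in pW$ to compute $\varphi(f) = T^{pe} + \sum_{i<e} \varphi(a_i) T^{pi}$ with $\varphi(a_i) \in pW$, whence
\[\rmd \varphi(f) = p\Bigl(eT^{pe-1} + \sum_{1 \leq i \leq e-1} i\varphi(a_i) T^{pi-1}\Bigr)\rmd T,\]
so $\rmd(\Phi(\xi)) = \rmd(\varphi(f)/p)$ is a well-defined element of $\Omega^1_{\mBR}$. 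Combining the two cases via Leibniz on $\tmBR = \mBR[\xi]$ yields the assertion for $\rmd \Phi$, and symmetrically for $\rmd \Psi$.

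\emph{Anticipated obstacle.} Part (iii) is the subtle one. On the new generator $\rmd(f/p)$ the image $\rmd(\varphi(f)/p)$ lies in $\Omega^1_{\mBR}$ but in general not in $p\,\Omega^1_{\mBR}$ (visibly so whenever $p\nmid e$). The ``divided by $p$'' assertion must therefore be interpreted in the filtered sense made available by the preceding lemma $\frac{1}{p^n}F^n\Omega^1_{\mBR} \subset F^n\Omega^1_{\tmBR}$ for $n<p$; it is precisely this filtered version that is needed in order to give meaning to the operator $\tfrac{\rmd \Phi}{p}$ appearing in the Frobenius-pullback formula $\rmd + \tfrac{\rmd \Phi}{p}(\Phi^*\tnabla)$ for the inverse Cartier transform used in the sequel.
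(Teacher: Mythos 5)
The paper offers no proof of this lemma — it is asserted with the phrase ``we get the following lemma directly'' — so there is nothing in the source to compare against; you are filling a genuine gap. Your arguments for (i) and (ii) are correct. For (i), the chain $\varphi(f)\equiv f^p\pmod p$, $f^p = p!\cdot\frac{f^p}{p!}\in p\mB_{W_\pi}$ establishes $\varphi(f)\in p\mBR$, and since $\mBR$ is $p$-torsion-free the forced value $\Phi(f/p)=\varphi(f)/p$ gives the unique extension. For (ii), the reduction to the generator $\xi=f/p$ and the convergent-sum computation is right.

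For (iii), the ``anticipated obstacle'' you flag is a genuine problem with the statement as written, not a presentational nuisance. Your computation $\rmd\Phi(\rmd\xi)=\rmd(\varphi(f)/p)=\bigl(eT^{pe-1}+\sum_{i\geq 1}i\varphi(a_i)T^{pi-1}\bigr)\rmd T$ is correct, and $T^{pe-1}\notin p\mBR$: e.g.\ for $e=1$, $f=T-p$, one has $T^{p-1}=(f+p)^{p-1}\equiv f^{p-1}=(p-1)!\tfrac{f^{p-1}}{(p-1)!}\not\equiv 0\pmod{p\mBR}$. So $\rmd\Phi(\Omega^1_{\tmBR})\not\subset p\Omega^1_{\mBR}$ whenever $p\nmid e$, and (iii) in its literal form fails. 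However, your proposed fix is not quite aimed at the right target: the inclusion $\frac{1}{p^n}F^n\Omega^1_{\mBR}\subset F^n\Omega^1_{\tmBR}$ concerns the \emph{source} of $\rmd\Phi$ (where $\tnabla$ takes values), not the divisibility of $\rmd\Phi$ itself. The cleaner statement that is both true and sufficient for the sequel is this: $\rmd\Phi$ is divisible by $p$ on the $\tmBR$-submodule of $\Omega^1_{\tmBR}$ generated by $\Omega^1_{\mBR}$ — your ``classical argument'' $\rmd\Phi(\rmd a)=\rmd(a^p+ph(a))\in p\Omega^1_{\mBR}$ for $a\in\mBR$ gives this by $\Phi$-linearity. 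This suffices because any $p$-connection $\tnabla$ has image inside that submodule: on the filtered basis the values are $\frac{\omega_{ij}}{p^{q_j-1-q_i}}\in\tmBR\cdot\Omega^1_{\mBR}$ by construction, and the $p$-Leibniz correction $p\,\rmd b\otimes v$ stays there because $p\,\rmd\xi=\rmd f\in\Omega^1_{\mBR}$. Thus $\frac{\rmd\Phi}{p}(\Phi^*\tnabla)$ is well-defined with values in $\Omega^1_{\mBR}$ even though $\frac{\rmd\Phi}{p}(\rmd\xi)$ only lies in $\Omega^1_{\tmBR}$. You should state (iii) in this restricted form (or with codomain $\Omega^1_{\tmBR}$, for which you verified $\frac{1}{p}\rmd\Phi(\rmd\xi)=e\,p^{p-2}(\xi-\alpha)^{p-1}T^{e-1}\rmd T+\cdots\in\Omega^1_{\tmBR}$), rather than leaving it as an ``interpretation.''
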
 
From now on, we call the extension given by i) of Lemma~\ref{lem:FrobLift} the \emph{Frobenius liftings} of $\overline{\Phi}$ on $\tmBR$.

\begin{lem}Let $\Phi:\tmBR\rightarrow \mBR$ and $\Psi:\tmBR\rightarrow \mBR$ be two Frobenius liftings of $\overline{\Phi}$ on $\tmBR$. Then there exists a $\mBR/p\mBR$-linear morphism \[h_{\Phi,\Psi}:\Omega^1_{\tmBR/p\tmBR}\otimes_{\overline\Phi}\mBR/p\mBR \rightarrow  \mBR/p\mBR\]
 satisfying that:
\begin{itemize}
\item[i).] we have $\frac{\rmd \Phi}{p}-\frac{\rmd \Psi}{p}=\rmd h_{\Phi,\Psi}$ over $\Omega^1_{\tmBR/p\tmBR}\otimes_{\overline\Phi}1$;
\item[ii).] the cocycle condition holds.
\end{itemize}
\end{lem}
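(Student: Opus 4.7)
The plan is to construct $h_{\Phi,\Psi}$ from the scaled difference $(\Phi-\Psi)/p$. By item (ii) of Lemma~\ref{lem:FrobLift}, $\Phi-\Psi$ maps $\tmBR$ into $p\mBR$, so the rule
\[
h(x):=\tfrac{\Phi(x)-\Psi(x)}{p}\bmod p
\]
defines an additive map $\tmBR\to\mBR/p\mBR$; evaluating at $px$ shows $h$ annihilates $p\tmBR$, hence descends to $h:\tmBR/p\tmBR\to\mBR/p\mBR$. Next I would check the twisted Leibniz rule $h(xy)=\overline{\Phi}(x)h(y)+h(x)\overline{\Phi}(y)$ by rewriting
\[
\Phi(xy)-\Psi(xy)=\Phi(x)(\Phi(y)-\Psi(y))+(\Phi(x)-\Psi(x))\Psi(y),
\]
dividing by $p$, and reducing modulo $p$ using $\Phi\equiv\Psi\equiv\overline{\Phi}\pmod p$. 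Since $\Phi$ and $\Psi$ both extend the Frobenius on $W$, $h$ vanishes on $W$-constants, so it is a $\overline{\Phi}$-semilinear $W$-derivation.

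By the universal property of K\"ahler differentials, $h$ then factors uniquely through $\rmd:\tmBR/p\tmBR\to\Omega^1_{\tmBR/p\tmBR}$ to give the desired $\mBR/p\mBR$-linear morphism
\[
h_{\Phi,\Psi}:\Omega^1_{\tmBR/p\tmBR}\otimes_{\overline{\Phi}}\mBR/p\mBR\longrightarrow \mBR/p\mBR,\qquad \rmd f\otimes 1\longmapsto \tfrac{\Phi(f)-\Psi(f)}{p}\bmod p.
\]
For property (i) I would evaluate both sides on a generator $\rmd f\otimes 1$: the left-hand side becomes $\tfrac{\rmd(\Phi(f))-\rmd(\Psi(f))}{p}=\rmd\!\bigl(\tfrac{\Phi(f)-\Psi(f)}{p}\bigr)$ in $\Omega^1_{\mBR}/p\Omega^1_{\mBR}$, which equals $\rmd(h_{\Phi,\Psi}(\rmd f\otimes 1))$ by definition. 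For the cocycle condition (ii), given three Frobenius lifts $\Phi_1,\Phi_2,\Phi_3$ the telescoping $\Phi_1-\Phi_3=(\Phi_1-\Phi_2)+(\Phi_2-\Phi_3)$, divided by $p$ and reduced modulo $p$, immediately yields $h_{\Phi_1,\Phi_3}=h_{\Phi_1,\Phi_2}+h_{\Phi_2,\Phi_3}$.

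The main subtle point is not the algebraic manipulation above, which is the familiar Taylor-expansion construction from the Ogus-Vologodsky and Faltings theory, but rather the preliminary input supplied by Lemma~\ref{lem:FrobLift}: that $\Phi-\Psi$ is divisible by $p$ on all of $\tmBR$, not merely on $\mBR$. This divisibility is delicate because $\tmBR$ is obtained from $\mBR$ by adjoining $f/p$, so one must track the images of $f/p$ under both Frobenius lifts and verify the required $p$-adic estimates before the definition of $h$ above makes sense. Once that ingredient is in place, the construction and the cocycle identity are essentially immediate.
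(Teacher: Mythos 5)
Your construction is the same one the paper uses: both define $h_{\Phi,\Psi}$ on generators by $\rmd g\otimes 1\mapsto \frac{\Phi(\hat g)-\Psi(\hat g)}{p}\bmod p$, using the $p$-divisibility of $\Phi-\Psi$ from Lemma~\ref{lem:FrobLift}, and both verify the additivity and twisted Leibniz relations needed to extend $\mBR/p\mBR$-linearly. You spell out the descent modulo $p$, the appeal to the universal property of K\"ahler differentials, and the verification of (i) and the telescoping argument for (ii) a bit more explicitly than the paper, but the argument is identical in substance.
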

\begin{proof}
As $\Omega^1_{\tmBR/p\tmBR}\otimes_{\overline\Phi}\mBR/p\mBR$ is an $\mBR/p\mBR$-module generated by elements of the form $\rmd g\otimes 1$ ($g\in\tmBR/p\tmBR$) with relations $\rmd(g_1+g_2)\otimes 1-\rmd g_1 \otimes 1-\rmd g_2 \otimes 1$ and $\rmd(g_1g_2)\otimes 1-\rmd g_1 \otimes\overline{\Phi}(g_2)-\rmd g_2 \otimes \overline{\Phi}(g_1)$.
Since $\Phi-\Psi$ is divided by $p$, we can denote $h_{ij}({\rmd g}\otimes 1)=\frac{\Phi(\hat{g})-\Psi(\hat{g})}{p}\pmod{p}\in \mBR/p\mBR$ for any element $g\in \mO_{U_1}$ (the definition does not depend on the choice of the lifting $\hat{g}$ of $g$ in $\mO_{\mU}$). By direct calculation, we have
\[h_{ij}(\rmd(g_1+g_2)\otimes 1)=h_{ij}(\rmd g_1 \otimes 1)+h_{ij}(\rmd g_2 \otimes 1)\]
and
\[h_{ij}(\rmd(g_1g_2)\otimes 1)=\overline{\Phi}(g_2)\cdot h_{ij}(\rmd g_1 \otimes 1)+\overline{\Phi}(g_1)\cdot h_{ij}(\rmd g_2 \otimes 1)\]
Thus $h_{ij}$ can be $\mBR/p\mBR$-linearly extended.
One checks i) and ii) directly by definition.
\end{proof}

Let $(\tV,\tnabla)$ be a locally filtered free sheaf over $\tsX_{\pi,1}=\tsX_\pi\otimes_{\bZ_p}\bF_p$ with an integrable $p$-connection. Here a ``filtered free'' module over a filtered ring $R$ is a direct sum of copies of $R$ with the filtration shifted by a constant amount. The associated graded then has a basis over $gr_F(R)$ consisting of homogeneous elements(see \cite{Fal99}).  Let $(\tV_i,\tnabla_i)=(\tV,\tnabla)\mid_{\widetilde{\sU}_{i,1}}$ be its restriction on the open subset $\widetilde{\sU}_{i,1}=\widetilde{\sU}_{i}\otimes_{\bZ_p}\bF_p$. By taking functor $\Phi_i^*$, we get bundles with integrable connections over $\sU_{i,1}=\sU_i\otimes_{\bZ_p}\bF_p$
\[\left(\Phi_i^*\tV_i, \rmd + \frac{\rmd \Phi_i}{p}(\Phi_i^*\tnabla)\right).\]
\begin{lem}\label{gluingFlatBundle} Let $(\tV,\tnabla)$ be a locally filtered free sheaf over $\tsX_{\pi,1}$ with an integrable $p$-connection. Then these local bundles with connections
	\[\left(\Phi_i^*\tV_i, \rmd + \frac{\rmd \Phi_i}{p}(\Phi_i^*\tnabla)\right)\]
can be glued into a global bundle with a connection on $\sX_{\pi,1}$ via transition functions
\[G_{ij}=\exp\left(h_{\Phi_i,\Phi_j}(\overline{\Phi}^*\tnabla)\right):\Phi_{i}^*(\tV_{ij}) \rightarrow \Phi_{j}^*(\tV_{ij}).\] 
Denote this global bundle with connection by $C^{-1}_{\sX_{\pi,1}}(\tV,\tnabla)$. Then we can construct a functor 
\[C^{-1}_{\sX_{\pi,1}}: \tMIC(\tsX_{\pi,1})\rightarrow \MIC(\sX_{\pi,1}).\] 
\end{lem}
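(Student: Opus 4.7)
My approach mirrors the construction of the inverse Cartier functor in the unramified setting of Lan--Sheng--Zuo \cite{LSZ13a}, transported to the ring $\mBR/p\mBR$ via the Frobenius lifts $\Phi_i$. The local flat bundles $\bigl(\Phi_i^*\tV_i,\, \rmd+\frac{\rmd\Phi_i}{p}(\Phi_i^*\tnabla)\bigr)$ are already produced by the pullback construction, so what actually requires proof is threefold: (a) each formula $G_{ij}=\exp(h_{\Phi_i,\Phi_j}(\overline{\Phi}^*\tnabla))$ defines an $\mBR/p\mBR$-linear automorphism on the relevant overlap, (b) the family $\{G_{ij}\}$ satisfies the cocycle condition $G_{ij}G_{jk}=G_{ik}$, and (c) $G_{ij}$ intertwines the two local connections. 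Since all computations take place modulo $p$, Lemma~\ref{lem:FrobLift}(ii) ensures that $\Phi_i$ and $\Phi_j$ reduce to the same map $\overline{\Phi}:\tmBR/p\tmBR\to\mBR/p\mBR$, so $\Phi_i^*\tV_{ij}$ and $\Phi_j^*\tV_{ij}$ agree as $\mBR/p\mBR$-modules and $G_{ij}$ is legitimately an endomorphism of this common module.

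For (a) I would invoke the hypothesis, built into the definition of $\tMIC(\tmBR/p\tmBR)$, that $\tnabla$ is a nilpotent $p$-connection. Combined with the $\mBR/p\mBR$-linearity of $h_{\Phi_i,\Phi_j}$ from the preceding lemma, this forces the endomorphism $N_{ij}:=h_{\Phi_i,\Phi_j}(\overline{\Phi}^*\tnabla)$ to be nilpotent, so the exponential series terminates after finitely many terms and gives an automorphism with inverse $\exp(-N_{ij})$; integrability $\tnabla^2=0$ makes the iterated applications of $\overline{\Phi}^*\tnabla$ implicit in higher powers of $N_{ij}$ unambiguous via the usual de Rham formalism. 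For (b), the cocycle identity $h_{\Phi_i,\Phi_j}+h_{\Phi_j,\Phi_k}=h_{\Phi_i,\Phi_k}$ from the preceding lemma translates into $N_{ij}+N_{jk}=N_{ik}$; since $N_{ij}$ and $N_{jk}$ are $\mBR/p\mBR$-linear contractions of the same flat operator $\overline{\Phi}^*\tnabla$, they commute as endomorphisms, and the standard identity $\exp(N_{ij})\exp(N_{jk})=\exp(N_{ij}+N_{jk})$ yields $G_{ij}G_{jk}=G_{ik}$.

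The main technical obstacle is (c). Here I would differentiate $G_{ij}=\exp(N_{ij})$ term by term and use the relation $\frac{\rmd\Phi_i-\rmd\Phi_j}{p}=\rmd h_{\Phi_i,\Phi_j}$ on $\Omega^1_{\tmBR/p\tmBR}\otimes_{\overline{\Phi}}1$ from the preceding lemma, together with the integrability of $\tnabla$, to match $\rmd\circ G_{ij}+\tfrac{\rmd\Phi_i}{p}(\Phi_i^*\tnabla)\circ G_{ij}$ with $G_{ij}\circ\bigl(\rmd+\tfrac{\rmd\Phi_j}{p}(\Phi_j^*\tnabla)\bigr)$ at each order of the (finite) exponential expansion. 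The computation is a direct Leibniz-rule chase across a nilpotent exponential, structurally identical to the one performed in \cite{LSZ13a} but now carried out over the coefficient ring $\mBR/p\mBR$ in place of $\hR$; no new input is needed beyond the $\mBR/p\mBR$-linearity of $h_{\Phi_i,\Phi_j}$, its derivation identity, the cocycle condition, and the nilpotency of $\tnabla$. Once (a)--(c) are verified, the local pieces $\bigl(\Phi_i^*\tV_i,\,\rmd+\frac{\rmd\Phi_i}{p}\Phi_i^*\tnabla\bigr)$ glue via $\{G_{ij}\}$ to a bundle with integrable connection on $\sX_{\pi,1}$, which we take as the definition of $C^{-1}_{\sX_{\pi,1}}(\tV,\tnabla)$; functoriality in $(\tV,\tnabla)$ is automatic because each ingredient depends naturally on the input.
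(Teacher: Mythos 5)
Your proposal is correct and takes essentially the same route as the paper: both verify the cocycle condition via the additivity and commutativity (ultimately integrability of $\tnabla$) of the $N_{ij}=h_{\Phi_i,\Phi_j}(\overline{\Phi}^*\tnabla)$, and both establish the intertwining by combining the derivation identity $\frac{\rmd\Phi_i}{p}-\frac{\rmd\Phi_j}{p}=\rmd h_{\Phi_i,\Phi_j}$ with the logarithmic-derivative computation for $\exp(N_{ij})$. The paper states the key identity $G_{ij}^{-1}\rmd G_{ij}=\rmd h_{\Phi_i,\Phi_j}(\overline{\Phi}^*\tnabla)$ more explicitly while you spell out the nilpotency/termination and well-definedness points; the substance is the same.
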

\begin{proof}The cocycle condition easily follows from the integrability of the Higgs field. We show that the local connections coincide on the overlaps, that is
\[\left(G_{ij}\otimes \id\right)
\circ
 \left(\rmd + \frac{\rmd \Phi_i}{p}(\Phi_i^*\tnabla)\right)
=\left(\rmd + \frac{\rmd \Phi_j}{p}(\Phi_j^*\tnabla)\right) \circ G_{ij}. \]
It suffices to show
\[\frac{\rmd \Phi_i}{p}(\Phi_i^*\tnabla)
=
G^{-1}_{ij}\circ \rmd G_{ij}
+
G^{-1}_{ij}\circ \frac{\rmd \Phi_j}{p}(\Phi_j^*\tnabla) \circ G_{ij}.\]
Since $G^{-1}_{ij}\circ \rmd G_{ij}=\rmd h_{\Phi_i,\Phi_j}(\overline{\Phi}^*\tnabla)$ and $G_{ij}$ commutes with $\frac{\rmd \Phi_j}{p}(\Phi_j^*\tnabla)$ we have
\begin{equation*}
\begin{split}
G^{-1}_{ij}\circ \rmd G_{ij}+G^{-1}_{ij}\circ \frac{\rmd \Phi_j}{p}(\Phi_j^*\tnabla) \circ G_{ij} 
& =\rmd h_{\Phi_i,\Phi_j}(\overline{\Phi}^*\tnabla)
+\frac{\rmd \Phi_j}{p}(\Phi_j^*\tnabla)\\
& =\frac{\rmd \Phi_i}{p}(\Phi_i^*\tnabla)
\end{split}
\end{equation*}
by the integrability of the Higgs field.
Thus we glue those local bundles with connections into a global bundle with connection via $G_{ij}$.
\end{proof}
\begin{lem} To give an object in the category $\MF(\sX_{\pi,1})$ is equivalent to give a tuple $(V,\nabla,\Fil,\phi)$ satisfying
\begin{itemize}
\item[i).] $V$ is filtered local free sheaf over $\sX_{\pi,1}$ with local basis having filtration degrees contained in $[0,a]$;
\item[ii).] $\nabla:V\rightarrow V\otimes_{\mO_{\sX_{\pi,1}}} \Omega^1_{\sX_{\pi,1}}$ is an integrable connection satisfying the Griffiths transversality;
\item[iii).] $ \varphi:C_{\sX_{\pi,1}}^{-1}\widetilde{(V,\nabla,\Fil)}\simeq (V,\nabla)$ is an isomorphism of sheaves with connections over $\sX_{\pi,1}$.
\end{itemize}
\end{lem}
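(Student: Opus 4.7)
The plan is to reduce to the local (small affine) picture already worked out in the preceding subsections, and then compare gluing cocycles. The bulk of the work is bookkeeping: no new analytic input is needed beyond Theorem~\ref{thm:gluingFFmod} and the construction of $C^{-1}_{\sX_{\pi,1}}$ in Lemma~\ref{gluingFlatBundle}.

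Fix a covering $\{\sU_{i,1}\}$ as in the previous subsection and choose Frobenius liftings $\Phi_i:\tmB_{R_i}\to\mB_{R_i}$ on each chart. By definition, an object of $\MF(\sX_{\pi,1})$ consists of Fontaine-Faltings modules $(V_i,\nabla_i,\Fil_i,\varphi_i)\in\MFa(\mB_{R_i}/p\mB_{R_i})$ together with isomorphisms on overlaps provided by Theorem~\ref{thm:gluingFFmod}. The underlying filtered flat modules $(V_i,\nabla_i,\Fil_i)$ glue independently of the Frobenius datum into a global filtered flat bundle $(V,\nabla,\Fil)$ on $\sX_{\pi,1}$, which is precisely the data i) and ii).

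The heart of the proof is to recognise each $\varphi_i$ as the restriction of a global isomorphism with the inverse Cartier transform. By construction of $C^{-1}_{\sX_{\pi,1}}$ in Lemma~\ref{gluingFlatBundle}, the local piece of $C^{-1}_{\sX_{\pi,1}}\widetilde{(V,\nabla,\Fil)}$ over $\sU_{i,1}$ is precisely $\bigl(\Phi_i^*\tV_i,\,\rmd+\tfrac{\rmd\Phi_i}{p}(\Phi_i^*\tnabla_i)\bigr)$, which is the domain of the local Frobenius structure $\varphi_i$ in the Fontaine-Faltings datum. Thus each $\varphi_i$ can be read as a local isomorphism $C^{-1}_{\sX_{\pi,1}}\widetilde{(V,\nabla,\Fil)}|_{\sU_{i,1}}\simeq(V,\nabla)|_{\sU_{i,1}}$. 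The key compatibility to check is that the cocycle used in Theorem~\ref{thm:gluingFFmod} to glue the $\varphi_i$ under change of Frobenius lift coincides with the transition functions $G_{ij}=\exp\bigl(h_{\Phi_i,\Phi_j}(\overline{\Phi}^*\tnabla)\bigr)$ defining the global inverse Cartier functor. Both are obtained by applying the Taylor exponential to the difference $\tfrac{\Phi_i-\Phi_j}{p}$ of Frobenius lifts — the former from the Taylor-formula proof of Theorem~\ref{thm:gluingFFmod}, the latter from the explicit $h_{\Phi_i,\Phi_j}$ of the preceding lemma — so they agree. Consequently the local $\varphi_i$ patch into a global $\varphi$ satisfying iii).

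The converse direction is immediate: restricting a global tuple $(V,\nabla,\Fil,\varphi)$ satisfying i)-iii) to each $\sU_{i,1}$ produces a Fontaine-Faltings module in $\MFa(\mB_{R_i}/p\mB_{R_i})$, and the globality of $\varphi$ automatically supplies the cocycle conditions required by Theorem~\ref{thm:gluingFFmod} to glue these into an object of $\MF(\sX_{\pi,1})$. The main obstacle, as anticipated, lies in the Taylor-formula matching of the two cocycles: one has to unwind simultaneously the Faltings tilde functor, the definition of $h_{\Phi_i,\Phi_j}$, and the change-of-Frobenius-lift equivalence of Theorem~\ref{thm:gluingFFmod} in order to confirm that the resulting $\exp$-expressions are literally the same.
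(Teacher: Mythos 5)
The paper leaves this lemma without a proof, treating it as bookkeeping, so there is no official argument to compare against. Your proposal is the natural one and, as far as I can tell, correct: objects of $\MFa(\sX_{\pi,1})$ are by construction local Fontaine--Faltings modules glued via the equivalences of Theorem~\ref{thm:gluingFFmod}, the underlying filtered de~Rham data glue trivially, and the local Frobenius structures $\varphi_i:\widetilde{(V_i,\nabla_i,\Fil_i)}\otimes_{\Phi_i}\mB_{R_i}\to(V_i,\nabla_i)$ have precisely $\bigl(\Phi_i^*\tV_i,\,\rmd+\tfrac{\rmd\Phi_i}{p}(\Phi_i^*\tnabla)\bigr)$ as their domains, which are the local pieces of $C^{-1}_{\sX_{\pi,1}}$. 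The one point where your write-up hand-waves is exactly the point you flag yourself: you assert rather than verify that the change-of-Frobenius cocycle of Theorem~\ref{thm:gluingFFmod} agrees with the $G_{ij}=\exp\bigl(h_{\Phi_i,\Phi_j}(\overline{\Phi}^*\tnabla)\bigr)$ of Lemma~\ref{gluingFlatBundle}. This claim is true — both equivalences are the Taylor/exponential of $h_{\Phi_i,\Phi_j}$ composed with the ($p$-)connection — but a referee-proof version should unwind it once: take a filtered basis, express $\varphi_j\circ G_{ij}$ on it, and match term-by-term with the Taylor expansion implementing the equivalence in Theorem~\ref{thm:gluingFFmod}. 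Given that the paper itself omits any proof, your level of detail is defensible, but I'd still write out that one computation so the matching of the two cocycles is not left as an assertion. One cosmetic remark: the statement uses $\MF(\sX_{\pi,1})$ and $\phi$ where the rest of the section uses $\MFa(\sX_{\pi,1})$ and $\varphi$; worth normalising in your write-up.
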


\subsubsection{The functors $\bD$ and $\bD^P$}
For an object in $\MFa(\sX_{\pi,1})$, we get locally constant sheaves on $\mU_K$ by applying the local $\bD$-functors. These locally constant sheaves can be expressed in terms of certain finite \'etale coverings. They can be glued into a finite covering of $\mX_{\pi,K}=\mX_K$. We have the following result.   
\begin{thm}
Suppose that $X$ is a proper smooth and geometrically connected scheme over $W$. Then there exists a fully faithful contravariant functor $\bD$ from $\MFa(\sX_{\pi,1})$ to the category of $\bF_p$-representations of $\pi_1(\mX_K)$. The image of $\bD$ on objects is closed under subobjects and quotients. Locally $\bD$ is given by the same as in Lemma~\ref{lem:defiFunctorD}. 
\end{thm}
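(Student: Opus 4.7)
The plan is to construct the global functor $\bD$ by gluing the local functors constructed in Lemma~\ref{lem:defiFunctorD}, exactly in parallel with the gluing of $\MFa(\mB_{R_i}/p\mB_{R_i})$ into $\MFa(\sX_{\pi,1})$ carried out in Theorem~\ref{thm:gluingFFmod}. Concretely, I would fix a covering $\{\mU_i\}$ of $\mX$ by small affines together with Frobenius liftings $\Phi_i:\tmB_{R_i}\to\mB_{R_i}$. Given an object $M=(V,\nabla,\Fil,\varphi)$ in $\MFa(\sX_{\pi,1})$, restrict to each $\sU_{i,1}$ to obtain $M_i\in\MFa(\mB_{R_i}/p\mB_{R_i})$, and apply the local functor to produce an $\bF_p$-representation $L_i:=\bD(M_i)$ of $\mathrm{Gal}(\overline{(R_i)_K}/(R_i)_K)$ of rank $\rank V$. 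Via the standard dictionary between continuous $\bF_p$-representations of the étale fundamental group and locally constant $\bF_p$-sheaves, $L_i$ corresponds to a finite étale $\bF_p$-local system $\mathcal L_i$ on $\mU_{i,K}$.

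Next, I would show that the $\mathcal L_i$ patch into a global local system $\mathcal L$ on $\mX_K$. On any overlap $\mU_{ij}=\mU_i\cap\mU_j$ the objects $M_i|_{\mU_{ij}}$ and $M_j|_{\mU_{ij}}$ are identified by the canonical equivalence of Theorem~\ref{thm:gluingFFmod}, which compares Fontaine--Faltings modules constructed with the two Frobenius liftings $\Phi_i$ and $\Phi_j$ via the Taylor-formula isomorphism. One must verify that $\bD$ transforms this canonical equivalence into a canonical isomorphism $\mathcal L_i|_{\mU_{ij,K}}\simeq \mathcal L_j|_{\mU_{ij,K}}$ and that the resulting cocycle condition on triple overlaps is satisfied. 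Granting this, descent for étale sheaves produces $\mathcal L$ on $\mX_K$, and since $\mX_K$ is connected (as $\mX$ is geometrically connected over $W$) $\mathcal L$ is equivalent to an $\bF_p$-representation of $\pi_1(\mX_K)$, which we declare to be $\bD(M)$. Functoriality in $M$ is immediate from the functoriality of the local $\bD_i$.

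Full faithfulness and closure of the image under subobjects and quotients reduce to the corresponding local assertions. For full faithfulness, a morphism $f:M\to M'$ restricts to local morphisms $f_i:M_i\to M_i'$ determined by their images $\bD(f_i)$ under the local $\bD$, and conversely any morphism $\bD(M')\to\bD(M)$ restricts to local Galois-equivariant maps $L_i'\to L_i$, which by local full faithfulness lift uniquely to $f_i:M_i\to M_i'$; uniqueness forces agreement on overlaps, so the $f_i$ glue to a global $f$. For closure under sub- and quotient objects, a sub-representation $L'\subset\bD(M)$ restricts to sub-representations $L_i'\subset L_i$, which by the local result come from sub-objects $M_i'\subset M_i$; uniqueness of the local lifting again ensures these sub-objects glue to a sub-object $M'\subset M$, and $\bD(M')=L'$ by construction.

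The main obstacle will be the gluing compatibility in the second step: one must trace through the definition $\bD(-)=\Hom_{B^+(R),\Fil,\varphi}(-\otimes_{\mBR}B^+(R),B^+(R)/pB^+(R))$ and verify that the explicit change-of-Frobenius isomorphism of Theorem~\ref{thm:gluingFFmod}, built from the Taylor expansion and the inverse Cartier comparison maps $h_{\Phi_i,\Phi_j}$, induces the \emph{canonical} comparison between the two presentations of the same local system on $\mU_{ij,K}$. This is the only place where the very-ramified structure genuinely enters, since one must work with the period ring $\mB_{R_\pi}$ in place of $\hR$; the argument runs in parallel with Faltings' original treatment in \cite{Fal89} once the constructions of $\mBR$, $\tmBR$ and $C^{-1}_{\sX_{\pi,1}}$ from the preceding subsections are in hand. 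A minor additional issue is the basepoint bookkeeping needed to upgrade the local equivalences of categories of $\bF_p$-sheaves into genuine étale descent data; this is handled by working with the fibre functor at a fixed geometric generic point of $\mX_K$ and transporting structure along étale paths in the covering.
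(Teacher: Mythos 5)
Your proposal matches the paper's own approach: the paper gives no formal proof of this theorem, only a brief sketch in the preceding paragraph saying that one applies the local $\bD$-functors to obtain locally constant sheaves on the $\mU_{i,K}$ and then glues the resulting finite \'etale coverings over $\mX_K$, which is precisely your construction. The gluing compatibility under change of Frobenius lifting and the basepoint bookkeeping, which you correctly flag as the delicate points, are left implicit in the paper (deferred to the arguments of~\cite{Fal89}), so your proposal is, if anything, more explicit than the source.
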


Again one can define the category $\MFa(\sX_{\pi,1}^o)$ in the logarithmic case, if one replaces all "connections" by "logarithmic connections" and "Frobenius lifting" by "logarithmic Frobenius lifting". We also have the version of $\MF_{[0,a],f}(\sX_{\pi,1}^o)$ with endomorphism structures of $\bF_{p^f}$, which is similar as the \emph{Variant 2} discussed in section $2$ of\cite{LSZ13a}. And the twisted versions $\TMF_{[0,a],f}(\sX_{\pi,1}^o)$ can also be defined on $\sX_{\pi,1}$ in a similar way as in \cite{SYZ17}. More precisely, let $\mL$ be a line bundle over $\sX_{\pi,1}$. The $\mL$-twisted Fontaine-Faltings module is defined as follows.
\begin{defi}
An $\mL$-twisted Fontaine-Faltings module over $\sX_{\pi,1}$ with endomorphism structure is a tuple \[((V,\nabla,\Fil)_0,(V,\nabla,\Fil)_1,\cdots,(V,\nabla,\Fil)_{f-1},\varphi_\cdot)\]
where $(V,\nabla,\Fil)_i$ are objects in $\MCF(\sX_{\pi,1}^o)$ equipped with isomorphisms in $\MIC(\sX_{\pi,1}^o)$
\[\varphi_i:C_{\sX_{\pi,1}}^{-1}\widetilde{(V,\nabla,\Fil)}_i\simeq (V,\nabla)_{i+1} \text{ for } i=0,1,\cdots,f-2;\]
and 
\[\varphi_{f-1}:C_{\sX_{\pi,1}}^{-1}\widetilde{(V,\nabla,\Fil)}_{f-1} \otimes (\mL^{p},\nabla_\can)\simeq (V,\nabla)_0.\] 
\end{defi}
 The proof of Theorem $0.4$ in \cite{SYZ17} works in this context. Thus we obtain the following result.
\begin{thm}\label{thm:functorD^P}
Suppose that $\mX$ is a proper smooth and geometrically connected scheme over $W$ equipped with a smooth log structure $\mD/W(k)$. Suppose that the residue field $k$ contains $\bF_{p^f}$. Then there exists an exact and fully faithful contravariant functor $\bD^P$ from $\TMF_{a,f}(\sX_{\pi,1}^o)$ to the category of projective $\bF_{p^f}$-representations of $\pi_1(\mX_{K}^o)$. The image of $\bD^p$  is closed under subobjects and quotients.
\end{thm}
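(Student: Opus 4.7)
The plan is to mimic the proof of Theorem 0.4 in \cite{SYZ17}, now in the very ramified setting built up in the preceding subsections. The data of an object in $\TMF_{[0,a],f}(\sX_{\pi,1}^o)$ is an $f$-tuple of filtered logarithmic flat bundles $(V,\nabla,\Fil)_i$ joined by inverse-Cartier isomorphisms $\varphi_i$, with the last link twisted by $(\mL^p,\nabla_\can)$. The first step is to repackage this tuple into a single ``$\mL^p$-twisted, $\bF_{p^f}$-equivariant'' Fontaine-Faltings module: set $V=\bigoplus_{i=0}^{f-1} V_i$ with the direct-sum connection and filtration, and assemble $\varphi_0,\ldots,\varphi_{f-1}$ into a single isomorphism $C^{-1}_{\sX_{\pi,1}}\widetilde{(V,\nabla,\Fil)}\otimes (\mL^p,\nabla_\can)\simeq (V,\nabla)$ which commutes with the cyclic shift of the summands. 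The cyclic shift then records the $\bF_{p^f}$-endomorphism structure.

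Next I would construct $\bD^P$ by working locally. Pick a finite open cover over which $\mL$ trivializes and fix trivializations; on each piece the twist disappears and we obtain an honest object of $\MFaf(\sX_{\pi,1}^o)$, to which the functor $\bD$ from the preceding theorem assigns a finite-rank $\bF_{p^f}$-linear representation of the corresponding local fundamental group. A different trivialization of $\mL$ differs by a unit, so the local $\GL_r(\bF_{p^f})$-representations differ by twisting with a central $\bF_{p^f}^\times$-valued character. Consequently the local representations glue only after quotienting by the centre, producing a well-defined projective representation $\rho\colon\pi_1(\mX_K^o)\to\PGL_r(\bF_{p^f})$. Contravariance, exactness, and compatibility with the $\bF_{p^f}$-structure all descend from the analogous properties of $\bD$.

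Full faithfulness and closure of the image under subobjects and quotients then reduce to the same properties of the untwisted $\bD$, since morphisms of twisted FF modules are precisely morphisms of the total bundle that intertwine the twisted $\varphi$, and these correspond bijectively to morphisms of the associated projective representations. The main obstacle I foresee is ensuring that the inverse Cartier construction from Lemma \ref{gluingFlatBundle}, which in the very ramified case only lives over the PD-hull $\mB_{R_\pi}$ rather than over $\R_\pi$, is compatible with the $\mL^p$-twist across different choices of Frobenius lifting and of local trivialization of $\mL$. Concretely, one must verify that the gluing functions $G_{ij}=\exp\bigl(h_{\Phi_i,\Phi_j}(\overline\Phi^*\tnabla)\bigr)$ transform correctly under change of trivialization of $\mL$, and that the equivalences of Theorem \ref{thm:gluingFFmod} respect simultaneously the $\bF_{p^f}$-action and the cocycle data of $\mL$. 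Once that compatibility is checked, the construction of $\bD^P$ and all its stated properties follow exactly as in \cite{SYZ17}.
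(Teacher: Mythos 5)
Your proposal matches the paper's approach: the paper itself gives no new argument here but simply observes that the proof of Theorem~0.4 in \cite{SYZ17} carries over once the infrastructure of the preceding subsections (the gluing Theorem~\ref{thm:gluingFFmod}, the inverse Cartier functor $C^{-1}_{\sX_{\pi,1}}$ from Lemma~\ref{gluingFlatBundle}, and the local $\bD$-functor of Definition~\ref{lem:defiFunctorD}) is in place. Your strategy of locally trivializing $\mL$, applying the untwisted $\bD$ on each piece, and gluing modulo the centre $\bF_{p^f}^\times$ to land in $\PGL_r(\bF_{p^f})$ is exactly the mechanism of \cite{SYZ17}, and you correctly flag the only genuinely new point in the very ramified setting — that the gluing functions $G_{ij}$ and the equivalences of Theorem~\ref{thm:gluingFFmod} must be checked to respect both the $\mL^p$-twist and the $\bF_{p^f}$-structure — which is precisely what the preceding lemmas were designed to make routine.
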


Recall that $\{\sU_i\}_i$ is an open covering of $\sX$. A line bundle on $\sX$ can be expressed by the transition functions on $\sU_{ij}$.
\begin{lem}
Let $\mL$ be a line bundle on $\sX_{\pi,1}$ expressed by $(g_{ij})$. Denote by $\widetilde{\mL}$ the line bundle on $\tsX_{\pi,1}$ defined by the same transition functions $(g_{ij})$. Then one has 
\[C^{-1}_{\sX_{\pi,1}}(\widetilde{\mL},0)=\mL^p.\]
\end{lem}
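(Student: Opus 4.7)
The strategy is to unpack the construction of $C^{-1}_{\sX_{\pi,1}}$ recalled in the preceding gluing lemma and to observe that essentially every nontrivial ingredient collapses when $\tnabla=0$. Recall that $C^{-1}_{\sX_{\pi,1}}(\tV,\tnabla)$ is built on each piece $\sU_{i,1}$ as the flat bundle $\bigl(\Phi_i^*\tV_i,\,\rmd+\tfrac{\rmd\Phi_i}{p}(\Phi_i^*\tnabla)\bigr)$, and these local pieces are glued over the overlaps $\sU_{ij,1}$ via the transition isomorphisms $G_{ij}=\exp\bigl(h_{\Phi_i,\Phi_j}(\overline{\Phi}^*\tnabla)\bigr)$.

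Applied to the pair $(\widetilde{\mL},0)$, the vanishing $\tnabla=0$ makes the local connection on each $\sU_{i,1}$ reduce to the trivial connection $\rmd$, and forces every gluing map to $G_{ij}=\exp(0)=\id$. So the output is the line bundle obtained by pulling $\widetilde{\mL}|_{\widetilde{\sU}_{i,1}}$ back along the morphism $\Phi_i:\sU_{i,1}\to\widetilde{\sU}_{i,1}$ of formal schemes and gluing on overlaps by the tautological identification $\Phi_i^*s=\Phi_j^*s$ for sections $s$ of $\widetilde{\mL}|_{\widetilde{\sU}_{ij,1}}$, carrying the canonical connection $\rmd$.

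The next step is to chase transition functions explicitly. Choose local trivializations $s_i$ of $\widetilde{\mL}|_{\widetilde{\sU}_i}$ with $s_i=g_{ij}s_j$, and let $\Phi_i^*s_i$ be the induced generator of $\Phi_i^*\bigl(\widetilde{\mL}|_{\widetilde{\sU}_{i,1}}\bigr)$. On the overlap $\sU_{ij,1}$, pulling back the relation $s_i=g_{ij}s_j$ by $\Phi_i$ yields $\Phi_i^*s_i=\Phi_i(g_{ij})\cdot\Phi_i^*s_j$, and $\Phi_i^*s_j$ is identified with $\Phi_j^*s_j$ via $G_{ij}=\id$. Thus $C^{-1}_{\sX_{\pi,1}}(\widetilde{\mL},0)$ has transition functions $\Phi_i(g_{ij})\bmod p$ in the basis $\{\Phi_j^*s_j\}_j$.

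Since each $\Phi_i$ lifts the absolute Frobenius $\overline{\Phi}$ on $\mBR/p\mBR$, one has $\Phi_i(g_{ij})\equiv g_{ij}^p\pmod p$, which are precisely the transition functions of $\mL^{\otimes p}$; the induced connection is $\rmd=\nabla_{\can}$. This gives $C^{-1}_{\sX_{\pi,1}}(\widetilde{\mL},0)=\mL^p$. There is no genuine obstacle; the only point requiring care is to distinguish the Frobenius pullback along $\Phi_i$ on an individual open set from the tautological gluing identification $\Phi_i^*s_j\leftrightarrow\Phi_j^*s_j$ on overlaps, and to verify that $\bigl(\Phi_i(g_{ij})\bmod p\bigr)_{ij}$ is indeed a cocycle, which follows formally from $(g_{ij})_{ij}$ being one.
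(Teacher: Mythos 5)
Your proof is correct and follows the same approach as the paper: observe that $\tnabla=0$ forces the gluing isomorphisms $G_{ij}=\exp(0)=\id$ and collapses the local connections to $\rmd$, so $C^{-1}_{\sX_{\pi,1}}(\widetilde{\mL},0)$ is just the mod-$p$ Frobenius pullback with transition functions $\Phi_i(g_{ij})\equiv g_{ij}^p\pmod p$, i.e.\ $\mL^p$. You unpack the transition-function chase and the role of the gluing identification somewhat more explicitly than the paper does, but the substance is the same; the one small point worth keeping visible (which the paper emphasizes) is that $\Phi(g_{ij})\equiv g_{ij}^p$ holds precisely because $g_{ij}$ lies in $\mB_{R_{ij}}\subset\tmB_{R_{ij}}$, where $\Phi$ actually lifts the $p$-th power map.
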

\begin{proof}
Since $g_{ij}$ is an element in $\mB_{R_{ij}} \subset \tmB_{R_{ij}}$, by diagram~(\ref{diag:FrobLift}), one has
\[\Phi(g_{ij})\equiv g_{ij}^p \pmod{p}.\] 
On the other hand, since the $p$-connection is trivial, one has 
 \[C^{-1}_{\sX_{\pi,1}}(\widetilde{\mL},0)=(\Phi\,\mathrm{mod}\,p)^*(\widetilde{\mL}).\]
Thus one has $C^{-1}_{\sX_{\pi,1}}(\widetilde{\mL},0)=(\mO_{\widetilde{\sU}_{i,1}},g_{ij}^p)=\mL^p$.
\end{proof}

In a similar way, one can define the Higgs-de Rham flow on $\sX_{\pi,1}$ as a sequence consisting of infinitely many alternating terms of Higgs bundles over $\tsX_{\pi,1}$ and filtered de Rham bundles over $\sX_{\pi,1}$
	\[\left\{ (E,\theta)_{0},  
	(V,\nabla,\Fil)_{0},
	(E,\theta)_{1},  
	(V,\nabla,\Fil)_{1},
	\cdots\right\}\]
with $(V,\nabla)_i=C^{-1}_{\sX_{\pi,1}}((E,\theta)_i)$ and $(E,\theta)_{i+1}=\widetilde{(V,\nabla,\Fil)_i}$ for all $i\geq 0$.

 \emph{$f$-periodic  $\mL$-twisted Higgs-de Rham flow} over $\sX_{\pi,1}$ of level in $[0,a]$ is a Higgs-de Rham flow over $\sX_{\pi,1}$
	\[\left\{ (E,\theta)_{0},  
	(V,\nabla,\Fil)_{0},
	(E,\theta)_{1},  
	(V,\nabla,\Fil)_{1},
	\cdots\right\}\]
equipped with isomorphisms $\phi_{f+i}:(E,\theta)_{f+i}\otimes (\widetilde{\mL}^{p^i},0)\rightarrow (E,\theta)_i$ of Higgs bundles  for all $i\geq0$ 
\begin{equation*}
\tiny\xymatrix@W=2cm@C=-13mm{
	&\left(V,\nabla,\Fil\right)_{0} \ar[dr]^{\widetilde{(\cdot)}}
	&&\left(V,\nabla,\Fil\right)_{1}\ar[dr]^{\widetilde{(\cdot)}}
	&&\cdots \ar[dr]^{\widetilde{(\cdot)}}
	&&\left(V,\nabla,\Fil\right)_{f}\ar[dr]^{\widetilde{(\cdot)}}  
	&&\left(V,\nabla,\Fil\right)_{f+1}\ar[dr]^{\widetilde{(\cdot)}} 
	&&\cdots\\
	\left(E,\theta\right)_{0}\ar[ur]_{\mathcal C^{-1}_{\sX_{\pi,1}}}
	&&\left(E,\theta\right)_{1}\ar[ur]_{\mathcal C^{-1}_{\sX_{\pi,1}}}
	&& \cdots \ar[ur]_{\mathcal C^{-1}_{\sX_{\pi,1}}}
	&&\left(E,\theta\right)_{f}\ar[ur]_{\mathcal C^{-1}_{\sX_{\pi,1}}} \ar@/^20pt/[llllll]^{\phi_f}|(0.33)\hole
	&&\left(E,\theta\right)_{f+1}\ar[ur]_{\mathcal C^{-1}_{\sX_{\pi,1}}} \ar@/^20pt/[llllll]^{\phi_{f+1}}|(0.33)\hole
	&& \cdots \ar@/^20pt/[llllll]^{\cdots}\ar[ur]_{\mathcal C^{-1}_{\sX_{\pi,1}}}\\} 
\end{equation*} 
And for any $i\geq 0$ the isomorphism
\begin{equation*}
 C^{-1}_{\sX_{\pi,1}}(\phi_{f+i}): (V,\nabla)_{f+i}\otimes (\mL^{p^{i+1}},\nabla_{\mathrm{can}})\rightarrow (V,\nabla)_{i},
\end{equation*} 
		strictly respects filtrations $\Fil_{f+i}$ and $\Fil_{i}$. Those $\phi_{f+i}$'s are related to each other by formula
\[\phi_{f+i+1}=\mathrm{Gr}\circ C^{-1}_{\sX_{\pi,1}}(\phi_{f+i}).\]
Just taking the same construction as in \cite{SYZ17}, we obtain the following result.
\begin{thm}\label{thm:equivalent}
There exists an equivalent functor $\IC_{\sX_{\pi,1}}$ from the category of twisted periodic Higgs-de Rham flows over $\sX_{\pi,1}$ to the category of twisted Fontaine-Faltings modules over $\sX_{\pi,1}$ with a commutative diagram 
\begin{equation}
\xymatrix{
\THDF(X_1) \ar[r]^{\IC_{X_1 }}  \ar[d]_{-\otimes_{\mO_{X_{1}}}\mO_{\sX_{\pi,1}}} & \TMF(X_1)  \ar[d]^{-\otimes_{\mO_{X_{1}}}\mO_{\sX_{\pi,1}} }\\
\THDF(\sX_{\pi,1}) \ar[r]^{\IC_{\sX_{\pi,1}}} & \TMF(\sX_{\pi,1})\ .\\
}
\end{equation}
\end{thm}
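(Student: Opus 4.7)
The plan is to run the same construction as in \cite{SYZ17} but in the ramified setting, and then observe that every step is compatible with the base change $\mO_{X_{1}}\rightsquigarrow \mO_{\sX_{\pi,1}}$ so that the square commutes on the nose. The reason this works is that all the building blocks, namely the inverse Cartier functor $C^{-1}_{\sX_{\pi,1}}$ of Lemma~\ref{gluingFlatBundle}, the Faltings tilde functor $\widetilde{(\cdot)}$, and the identification $C^{-1}_{\sX_{\pi,1}}(\widetilde{\mL},0)=\mL^p$ of the preceding lemma, have already been developed here in a way that is strictly parallel to their $X_1$-versions, and they reduce to the latter after restricting to $X_1\subset \sX_{\pi,1}$.

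First I would construct $\IC_{\sX_{\pi,1}}$ on objects. Given an $f$-periodic $\mL$-twisted Higgs--de Rham flow with terms $(E,\theta)_i$, $(V,\nabla,\Fil)_i$ and periodicity isomorphisms $\phi_{f+i}$, set
\[
\psi_i=\mathrm{Gr}\circ C^{-1}_{\sX_{\pi,1}}\colon C^{-1}_{\sX_{\pi,1}}\widetilde{(V,\nabla,\Fil)}_i\xrightarrow{\;\sim\;}(V,\nabla)_{i+1}
\]
for $0\le i\le f-2$, and use $\phi_f$ together with the identification $C^{-1}_{\sX_{\pi,1}}(\widetilde{\mL},0)=\mL^p$ to produce
\[
\psi_{f-1}\colon C^{-1}_{\sX_{\pi,1}}\widetilde{(V,\nabla,\Fil)}_{f-1}\otimes(\mL^p,\nabla_{\can})\xrightarrow{\;\sim\;}(V,\nabla)_0.
\]
This furnishes the data of an object in $\TMF(\sX_{\pi,1})$. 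Conversely, given such a Fontaine-Faltings module, iterating $\widetilde{(\cdot)}$ and $C^{-1}_{\sX_{\pi,1}}$ reconstructs a unique flow, and the required $\phi_{f+i}$ on the Higgs side are manufactured from the $\varphi_i$ by the same $\mathrm{Gr}\circ C^{-1}$ recipe used in \cite{SYZ17}. Full faithfulness and the bijectivity of this inverse construction are formal and follow verbatim from the arguments of \cite{SYZ17}, since the only non-formal inputs are the existence of $C^{-1}_{\sX_{\pi,1}}$ and Theorem~\ref{thm:gluingFFmod} (independence of the Frobenius lifting), both of which have been established above.

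For the commutativity of the square, the point is that base change along $\mO_{X_1}\to\mO_{\sX_{\pi,1}}$ is compatible with $C^{-1}$ and with $\widetilde{(\cdot)}$. For $\widetilde{(\cdot)}$ this is the contents of Lemma~\ref{lem:coeffExt} applied locally on each affine $\sU_i$. For $C^{-1}$ this is immediate from its construction in Lemma~\ref{gluingFlatBundle}: the local pieces $(\Phi_i^*\tV_i,\rmd+\tfrac{\rmd\Phi_i}{p}\Phi_i^*\tnabla)$ and the transition cocycles $G_{ij}=\exp(h_{\Phi_i,\Phi_j}(\overline\Phi^*\tnabla))$ are pulled back tautologically from the $X_1$-situation once one chooses the Frobenius liftings $\Phi_i$ on $\mBR$ to extend the chosen liftings on $\hR$. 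Gluing these compatibilities and applying them termwise to each $(E,\theta)_i$, $(V,\nabla,\Fil)_i$, together with compatibility of the twisting line bundle $\mL$ with its pullback $\mL\otimes_{\mO_{X_1}}\mO_{\sX_{\pi,1}}$, yields the commutative square.

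The main obstacle, as in the analogous passage in \cite{SYZ17}, is to keep the twisting data consistent: one has to verify that the $\mathrm{Gr}\circ C^{-1}$ trick which turns a Higgs-side identification $(E,\theta)_{f+i}\otimes(\widetilde{\mL}^{p^i},0)\simeq(E,\theta)_i$ into a de Rham-side identification of $C^{-1}\widetilde{(\cdot)}$ matches the twist $\mL^p$ appearing in the definition of a twisted Fontaine--Faltings module, and that this matching is preserved under base change. This reduces to the identity $C^{-1}_{\sX_{\pi,1}}(\widetilde{\mL},0)=\mL^p$, to the fact that $\mathrm{Gr}$ commutes with $C^{-1}$ on graded pieces, and to strict compatibility of filtrations, all of which are checked by the same local computations in a filtered basis as in \cite{SYZ17}, now taking place over $\mBR/p\mBR$ instead of $R/pR$ but using the same formulas thanks to Lemma~\ref{lem:F^iB&F^itB} and the two isomorphism lemmas on $\Gr^n\mBR$ and $\Gr^n\tmBR$.
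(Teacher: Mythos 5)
Your proposal takes exactly the same route the paper intends: the paper itself gives no detailed proof but says only ``Just taking the same construction as in \cite{SYZ17}, we obtain the following result,'' and your write-up is a faithful and somewhat more explicit unpacking of that construction, correctly identifying the needed compatibilities (Lemma~\ref{lem:coeffExt} for the tilde functor, the $\Phi_i$-local description of $C^{-1}_{\sX_{\pi,1}}$ from Lemma~\ref{gluingFlatBundle}, the identity $C^{-1}_{\sX_{\pi,1}}(\widetilde{\mL},0)=\mL^p$, and the strictness-of-filtration step). One small notational slip worth mentioning: for $0\le i\le f-2$ the $\psi_i$ should simply be the canonical identifications $(V,\nabla)_{i+1}=C^{-1}_{\sX_{\pi,1}}\widetilde{(V,\nabla,\Fil)_i}$ built into the flow, not ``$\mathrm{Gr}\circ C^{-1}_{\sX_{\pi,1}}$'' (which is a functor, not a morphism); the formula $\phi_{f+i+1}=\mathrm{Gr}\circ C^{-1}_{\sX_{\pi,1}}(\phi_{f+i})$ is the recipe relating the periodicity maps to one another, not the definition of the $\varphi_i$'s.
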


\subsection{degree and slope}
Recall that $\sX_\pi$ is a smooth formal scheme over $\mB_{W_\pi}$. Then $\sX_{\pi,1}$ and $X_1$ are the modulo-$p$ reductions of $\sX_\pi$ and $X$ respectively. Also note that $X_1$ is the closed fiber of $Y_1=\mX_\pi\otimes_{\bZ_p}\bF_p$, $\sX_{\pi,1}=\sX_\pi\otimes_{\bZ_p}\bF_p$, $\mX_\pi$ and $\sX_\pi$.
\begin{equation}
\xymatrix@R=2mm{
X_1\ar[r] \ar[dd] &\mX_{\pi,1} \ar[rr] \ar[dd] \ar[dr] && \sX_{\pi,1} \ar[dd]|(0.5)\hole \ar[dr]&\\
&& \widetilde{\sX_\pi}_1  \ar[rr] \ar[dd] && X_1 \ar[dd]\\
X_1\ar[r] \ar[dd]  & \mX_\pi \ar[dr] \ar[dd]  \ar[rr]|(0.5)\hole && \sX_\pi \ar[dr]\ar[dd]|(0.5)\hole &\\
&& \widetilde{\sX_\pi}  \ar[rr] \ar[dd]  && \mX \ar[dd]  \\
\Spec k \ar[r] & \Spec W_\pi \ar[drrr]|(0.34)\hole \ar[rr]|(0.5)\hole
 \ar[dr] && \Spec \mB_{W_\pi} \ar[dr] &\\
&& \Spec\widetilde{\mB}_\pi \ar[rr] && \Spec W\\
}
\end{equation}
For a line bundle $V$ on $\sX_{\pi,1}$ (resp. $\tsX_{\pi,1}$), $V\otimes_{\mO_{\sX_{\pi,1}}}\mO_{X_1}$ (resp. $V\otimes_{\mO_{\tsX_{\pi,1}}}\mO_{X_k}$) forms a line bundle on the special fiber $X_1$ of $\mX$. We denote
\[\deg(V):=\deg(V\otimes_{\mO_{\sX_{\pi,1}}}\mO_{X_1}).\]
For any bundle $V$ on $\sX_{\pi,1}$ (resp. $\tsX_{\pi,1}$) of rank $r>1$, we denote
\[\deg(V):=\deg(\bigwedge_{i=1}^{r}V).\]

By Lemma~\ref{lem:FrobLift}, the modulo-$p$ reduction of the Frobenius lifting is globally well-defined. We denote it by $\Phi_1:\tsX_{\pi,1}\rightarrow \sX_{\pi,1}$. Since $\tsX_{\pi,1}$ and $\sX_{\pi,1}$ have the same closed subset $X_1$, we have the following diagram 
\begin{equation}
\xymatrix{
X_1 \ar[r]^{\widetilde{\tau}} \ar[d]^{\Phi_{X_1}} & \tsX_{\pi,1} \ar[d]^{\Phi_1}\\
X_1 \ar[r]^{\tau} & \sX_{\pi,1}\\
}
\end{equation}
Here $\tau$ and $\widetilde{\tau}$ are closed embeddings and $\Phi_{X_1}$ is the absolute Frobenius lifting on $X_1$. We should remark that the diagram above is not commutative, because $\Phi_1$ does not preserve the defining ideal of $X_1$.

\begin{lem}\label{lem:IsoPullbacks} Let $(V,\nabla,\Fil)$ be an object in $\MCF(\sX_{\pi,1})$ of rank $1$. Then there is an isomorphism 
\[\Phi_{X_1}^*\circ\widetilde{\tau}^*(\tV)\overset{\sim}{\longrightarrow} \tau^*\circ\Phi_1^*(\tV).\]
\end{lem}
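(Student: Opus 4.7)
The plan is to reduce the assertion to a local computation on the cover $\{\sU_i\}$ and to track transition functions through the four functors appearing on each side. Let $v_i$ be a local basis for $V|_{\sU_{i,1}}$ of constant filtration degree $q$, and let $a_{ij} \in \mB_{R_{ij}}/p\mB_{R_{ij}}$ be the transition functions, so that $v_j = a_{ij} v_i$ on overlaps. Since $V$ has rank one, the matrix $Q$ appearing in the proof of well-definedness of the tilde functor is a scalar $p^q$, hence $QAQ^{-1}=A$; the bases $[v_i]_q$ of $\tV$ therefore satisfy the same cocycle $[v_j]_q = a_{ij}\,[v_i]_q$, with $a_{ij}$ now viewed in $\tmB_{R_{ij}}/p\tmB_{R_{ij}}$ via the inclusion $\mBR\subset\tmBR$.

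Next I compute transitions on $X_1$ via the two compositions. Because $\tau$ and $\widetilde{\tau}$ are closed immersions into $\sX_{\pi,1}$ and $\tsX_{\pi,1}$ with the same underlying space $X_1$, the two reduction maps $\mBR\to \mO_{X_1}$ and $\tmBR\to\mO_{X_1}$ agree on the common subring $\mBR$. Hence $\widetilde{\tau}^*\tV$ has transitions $\overline{a}_{ij}\in\mO_{X_1}$ (the image of $a_{ij}$), and $\Phi_{X_1}^*\widetilde{\tau}^*\tV$ has transitions $\overline{a}_{ij}^{\,p}$. On the other side, $\Phi_1^*\tV$ has transitions $\Phi_1(a_{ij})\in\mBR/p\mBR$, and restricting via $\tau$ produces transitions $\overline{\Phi_1(a_{ij})}$ on $X_1$. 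The key identity is that $\Phi_1$ restricted to $\mBR$ is a genuine Frobenius lift (the obstruction on $\tmBR$ being a phenomenon of the extra generator $f/p$), so $\Phi_1(a_{ij})\equiv a_{ij}^{\,p}\pmod p$ in $\mBR$, and therefore $\overline{\Phi_1(a_{ij})}=\overline{a}_{ij}^{\,p}$ in $\mO_{X_1}$. The two cocycles coincide, so the local assignment $\Phi_{X_1}^*\widetilde{\tau}^*[v_i]_q\mapsto \tau^*\Phi_1^*[v_i]_q$ glues into the required global isomorphism; well-definedness under change of basis and change of Frobenius lift follows from the same mod-$p$ coincidence.

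The main obstacle is precisely the failure of commutativity of the diagram already flagged by the authors: $\Phi_1$ does not preserve the defining ideal of $X_1$ inside $\sX_{\pi,1}$, and in higher rank the tilde functor uses the conjugation $QAQ^{-1}$ whose entries are forced out of $\mBR$ into $\tmBR$ as soon as the filtration degrees $q_i$ differ. Rank one eliminates both problems simultaneously: $Q$ is a scalar so the conjugation is trivial, and all transition data live in the subring $\mBR$, on which $\Phi_1$ behaves as an honest PD-Frobenius lifting modulo $p$. The argument then reduces to the standard fact that any Frobenius lift agrees with the absolute Frobenius after reduction mod $p$.
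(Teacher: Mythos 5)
Your argument is essentially the paper's: express the rank-one $V$ (and hence $\tV$, since the conjugating matrix $Q$ is scalar so $QAQ^{-1}=A$) by transition functions $a_{ij}\in\mBR/p\mBR$, restrict both compositions to $X_1$, and invoke the commutativity of the Frobenius-lift square for $\mBR$ to match the two cocycles. One small caveat: the intermediate assertion ``$\Phi_1(a_{ij})\equiv a_{ij}^{\,p}\pmod p$ in $\mBR$'' overstates what that diagram provides --- $\Phi$ is constructed as a lift of the $p$-power map on $R_\pi/\pi R_\pi$, not on all of $\mBR/p\mBR$ --- but since the only consequence you actually use is $\overline{\Phi_1(a_{ij})}=\overline{a}_{ij}^{\,p}$ in $\mO_{X_1}$, which is precisely the commutative diagram, the proof is unaffected.
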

\begin{proof}Recall that $\{\sU_i\}_i$ is an open covering of $\sX$. We express the line bundle $V$ by the transition functions $(g_{ij})$, where $g_{ij}\in \left(\mB_{R_{ij}}/p\mB_{R_{ij}}\right)^\times$. Since $V$ is of rank $1$, the filtration $\Fil$ is trivial. Then 
by definition $\tV$ can also be expressed by $(g_{ij})$. Since $g_{ij}\in\mB_{R_{ij}}/p\mB_{R_{ij}}$, one has
\[(\Phi_{X_1}\mid_{U_{i,1}})^*\circ(\widetilde{\tau}\mid_{\widetilde{\sU}_{i,1}})^*(g_{ij}) = (\tau\mid_{\sU_{i,1}})^*\circ(\Phi_1\mid_{\sU_{i,1}})^*(g_{ij}),\]
by diagram~(\ref{diag:FrobLift}). This gives us the isomorphism $\Phi_{X_1}^*\circ\widetilde{\tau}^*(\tV)\overset{\sim}{\longrightarrow} \tau^*\circ\Phi_1^*(\tV)$.
\end{proof}

\begin{lem}\label{lem:deg&C^-1} Let $(V,\nabla,\Fil)$ be an object in $\MCF(\sX_{\pi,1})$. Then we have
\[\deg(\tV)=\deg(V) \text{ and } \deg(C^{-1}_{\sX_{\pi,1}}(\tV))=p\deg(\tV).\]
\end{lem}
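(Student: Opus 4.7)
The plan is to reduce to the rank-one case by taking determinants, and then compute explicitly with transition functions. First, I would check that both $\widetilde{(\cdot)}$ and $C^{-1}_{\sX_{\pi,1}}$ commute with $\det$: for the tilde functor this is evident from the local description, since $[v_1]_{q_1}\wedge\cdots\wedge[v_r]_{q_r}$ is canonically identified with $[v_1\wedge\cdots\wedge v_r]_{q_1+\cdots+q_r}$; for $C^{-1}_{\sX_{\pi,1}}$ it follows because the Frobenius pullback commutes with $\det$ and the gluing cocycle $G_{ij}=\exp(h_{\Phi_i,\Phi_j}(\overline{\Phi}^*\tnabla))$ descends on $\det$ to the exponential of the corresponding trace. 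Since by definition $\deg V=\deg(\det V)$, both claims reduce to the case $\rank V=1$.

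In rank one a filtered basis consists of a single local section $v$ of a fixed filtration degree $q_0$, so by construction $\tV$ is the line bundle on $\tsX_{\pi,1}$ described by the same transition functions $(g_{ij})\in(\mBR/p\mBR)^\times$ as $V$, now viewed in $(\tmBR/p\tmBR)^\times$. Restriction along the closed embeddings $\tau$ and $\tilde\tau$ sends $(g_{ij})$ to their common image in $\mO_{X_1}^\times$ through the natural surjections onto $R_\pi/\pi R_\pi=R/pR$, giving a canonical isomorphism $\tau^*V\simeq\tilde\tau^*\tV$ on $X_1$ and therefore $\deg V=\deg\tV$.

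For the second equality, the crucial observation is that in rank one the $p$-connection on $\tV$ vanishes modulo $p$. Indeed $\nabla(v)=v\otimes\omega$ with $\omega\in F^{-1}\Omega^1_\mBR=\Omega^1_\mBR$, so the defining formula of $\tnabla$ gives $\tnabla([v]_{q_0})=[v]_{q_0}\otimes p\omega\equiv 0\pmod p$. Consequently every gluing cocycle $G_{ij}$ is trivial, and $C^{-1}_{\sX_{\pi,1}}(\tV)$ coincides with $C^{-1}_{\sX_{\pi,1}}(\tV,0)$, which by the preceding lemma equals $V^p$. Combined with the first equality this yields $\deg C^{-1}_{\sX_{\pi,1}}(\tV)=\deg V^p=p\deg V=p\deg\tV$. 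The only genuinely nontrivial step is the vanishing of $\tnabla$ modulo $p$ in the rank-one case, which is what allows the direct application of the preceding lemma on untwisted line bundles; everything else is bookkeeping with transition functions and the fact that the absolute Frobenius on the closed fiber $X_1$ multiplies degrees by $p$.
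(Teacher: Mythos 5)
Your proof is correct and follows essentially the same structure as the paper's: reduce to rank one via determinants, observe that $\tV$ has the same transition functions as $V$ so restriction to $X_1$ gives $\deg\tV=\deg V$, and note that in rank one the $p$-connection $\tnabla=p\omega$ vanishes modulo $p$. The one place you diverge is the last step: the paper computes $C^{-1}_{\sX_{\pi,1}}(\tV)=\Phi_1^*(\tV)$ directly and then uses Lemma~\ref{lem:IsoPullbacks} to move the degree computation to the absolute Frobenius on $X_1$, whereas you invoke the earlier lemma stating $C^{-1}_{\sX_{\pi,1}}(\widetilde{\mL},0)=\mL^p$ for line bundles. These two routes are thin reformulations of one another --- both rest on the same diagram~(\ref{diag:FrobLift}) computation $\Phi(g_{ij})\equiv g_{ij}^p\pmod p$ --- and your version is arguably slightly more economical since it reuses a result already in hand rather than re-deriving the Frobenius degree scaling through the pullback square. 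One small remark: your explicit derivation $\tnabla([v]_{q_0})=[v]_{q_0}\otimes p\omega$ makes visible what the paper compresses into ``since the filtration is trivial, the $p$-connection $\tnabla$ is also trivial,'' which is a useful clarification.
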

\begin{proof}
Since the tilde functor and inverse Cartier functor preserve the wedge product and the degree of a bundle is defined to be that of its determinant, we only need to consider the rank $1$ case. 
Now let $(V,\nabla,\Fil)$ be of rank $1$. The reductions of $V$ and $\tV$ on the closed fiber $X_1$ are the same, by the proof of Lemma~\ref{lem:IsoPullbacks}. Then we have 
\[\deg(\tV)=\deg(V).\]
Since the filtration is trivial, the $p$-connection $\tnabla$ is also trivial. In this case, the transition functions $G_{ij}$ in Lemma~\ref{gluingFlatBundle} are idenities. Thus 
\[C^{-1}_{\sX_{\pi,1}}(\tV)=\Phi_1^*(\tV).\]
Recall that $\deg(\Phi_1^*(\tV))=\deg(\tau^*\circ\Phi_1^*(\tV))$ and $\deg(\tV)=\deg(\widetilde{\tau}^*(\tV))$. Lemma~\ref{lem:IsoPullbacks} implies $\deg(\tau^*\circ\Phi_1^*(\tV))=\deg(\Phi_{X_1}^*\circ\widetilde{\tau}^*(\tV))$. Since $\Phi_{X_1}$ is the absolute Frobenius, one has $\deg(\Phi_{X_1}^*\circ\widetilde{\tau}^*(\tV))=p\deg(\widetilde{\tau}^*(\tV))$. Composing above equalities, we get $\deg(C^{-1}_{\sX_{\pi,1}}(\tV))=p\deg(\tV)$.
\end{proof}

\begin{thm}
Let	$\mE=\left\{ (E,\theta)_{0},  
(V,\nabla,\Fil)_{0},
(E,\theta)_{1},  
(V,\nabla,\Fil)_{1},
\cdots\right\}$ be an $L$-twisted $f$-periodic Higgs-de Rham flow with endomorphism structure and log structure over $X_{1}$. Suppose that the degree and rank of the initial term $E_0$ are coprime. Then the projective representation $\bD^P\circ\IC_{\sX_{\pi,1}}(\mE)$ of $\pi_1(X_{K_0}^o)$ is still irreducible after restricting to the geometric fundamental group $\pi_1(X^o_{\overline{K}_0})$, where $K_0=W[\frac1p]$.
\end{thm}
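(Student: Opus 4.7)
The plan is to follow the outline of the introduction: use a Galois-theoretic reduction to replace $K_0$ by a (possibly very ramified) finite extension $K$, then import the slope/coprime argument of \cite{SYZ17} via the ramified framework of Section 2.

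First, I fix a base point and look at the short exact sequence
\[
1\to \pi_1(X^o_{\overline{K}_0})\to \pi_1(X_{K_0}^o)\to \Gal(\overline{K}_0/K_0)\to 1.
\]
Pulling back $\rho := \bD^P\circ\IC_{\sX_{\pi,1}}(\mE)$ along a point $x_0\in X^o_{K_0}$ (existence of which is arranged by a mild enlargement of $k$), its restriction to $\Gal(\overline{K}_0/K_0)$ lands in the finite group $\mathrm{PGL}_r(\bF_{p^f})$, hence has finite image. Let $K/K_0$ be the fixed field of its kernel. Then $\rho|_{\Gal(\overline{K}_0/K)}$ is trivial, so $\rho\bigl(\pi_1(X_K^o)\bigr)=\rho\bigl(\pi_1(X^o_{\overline{K}_0})\bigr)$, and the theorem reduces to showing that $\rho|_{\pi_1(X_K^o)}$ is irreducible in $\mathrm{PGL}_r(\overline{\bF}_p)$.

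Second, after enlarging $k$ to absorb the maximal unramified subextension of $K/K_0$, I may assume $K = K_0[\pi]$ is totally ramified of some degree $e$ over $K_0$, which is exactly the setting of Section~2. By Lemma~\ref{lem:coeffExt} and the commutative diagram of Theorem~\ref{thm:equivalent}, the base change of $\mE$ from $X_1$ to $\sX_{\pi,1}$ is again an $\mL$-twisted $f$-periodic Higgs-de Rham flow whose associated twisted Fontaine-Faltings module is $\IC_{\sX_{\pi,1}}(\mE)$, and $\bD^P$ (Theorem~\ref{thm:functorD^P}) returns exactly $\rho|_{\pi_1(X_K^o)}$.

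Third, I argue by contradiction. If $\rho|_{\pi_1(X_K^o)}$ were reducible over $\overline{\bF}_p$, then after replacing $\bF_{p^f}$ by a large enough $\bF_{p^{fs}}$ --- which, on the Higgs-de Rham side, corresponds to iterating the period $s$ times, yielding an $fs$-periodic flow with twisting line bundle $\mL^{(p^{fs}-1)/(p^f-1)}$ --- I may assume the sub-representation is defined over this enlarged coefficient field. Full faithfulness of $\bD^P$ and closure of its image under sub-objects transport this to a proper non-zero sub twisted Fontaine-Faltings module over $\sX_{\pi,1}$, and Theorem~\ref{thm:equivalent} then produces a proper sub twisted periodic Higgs-de Rham flow, whose initial term is a saturated Higgs subsheaf $(E',\theta')\subset (E_0,\theta_0)\otimes_{\mO_{X_1}}\mO_{\tsX_{\pi,1}}$ of some rank $r'$ with $0<r'<r$.

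The contradiction is the usual slope computation, now carried out on $\tsX_{\pi,1}$. By Lemma~\ref{lem:deg&C^-1} each pass through the flow multiplies the degree of the initial term by $p$, so the periodicity isomorphism yields $(p^f-1)\deg(E_0) = -r\deg(\mL)$ and the same relation for the sub-flow, giving $\deg(E')/r' = \deg(E_0)/r$. The coprimality of $r$ with $\deg(E_0)$ forces $r\mid r'$, contradicting $r'<r$. I expect the main technical hurdle to be not the slope equality itself but the chain of compatibilities behind it: verifying that base change $X_1\to\sX_{\pi,1}$, enlargement of the endomorphism field $\bF_{p^f}\to\bF_{p^{fs}}$, and the passage between sub-representations, sub-Fontaine-Faltings modules, and sub-Higgs-de Rham flows all commute, and that Lemma~\ref{lem:deg&C^-1} applies to a sub-flow that only exists over $\tsX_{\pi,1}$ and need not descend to $X_1$. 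Once this bookkeeping is settled, the argument tracks the one in \cite{SYZ17}.
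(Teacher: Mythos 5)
Your proposal is correct and follows essentially the same path as the paper's own proof: Galois reduction via a $K_0$-point to pass from $\pi_1(X^o_{\overline{K}_0})$ to $\pi_1(X^o_K)$ for a finite (ramified) extension $K/K_0$, base change to $\sX_{\pi,1}$ using the compatibility diagram of Theorem~\ref{thm:equivalent}, then a slope/coprimality argument on a putative non-trivial sub twisted periodic Higgs--de Rham flow via Lemma~\ref{lem:deg\&C\textasciicircum-1}. Your step on enlarging $\bF_{p^f}$ to $\bF_{p^{fs}}$ to pin down the sub-representation over a finite coefficient field is a genuine care-point that the paper's written proof elides (it simply asserts reducibility and passes directly to a sub-flow), as is your observation that the sub-Higgs-term lives on $\tsX_{\pi,1}$ rather than descending to $X_1$; both points are consistent with the intended argument and fill minor gaps rather than changing the method.
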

\begin{proof}
Let $\rho:\pi_1(X_{K_0}^o)\rightarrow \mathrm{PGL}(\bD^P\circ\IC_{\sX_{\pi,1}}(\mE))$ be the projective representation. Fix a $K_0$-point in $X_{K_0}$, which induces a section $s$ of the surjective map $\pi_1(X_{K_0}^o)\rightarrow \mathrm{Gal}(\overline{K}_0/K_0)$. We restrict $\rho$ on $\mathrm{Gal}(\overline{K}_0/K_0)$ by this section $s$, whose image is finite. And there is a finite field extension $K/K_0$ such that the restriction of $\rho$ by $s$ on $\mathrm{Gal}(\overline{K}_0/K)$ is trivial. Thus 
\[\rho(\pi_1(X_{K}^o))=\rho(\pi_1(X^o_{\overline{K}_0})).\]
It is sufficient to show that the restriction of $\rho$ on $\pi_1(X_{K}^o)$ is irreducible. Suppose that the restriction of $\bD^P\circ\IC_{X_1}(\mE)$  on $\pi_1(X_{K}^o)$ is not irreducible.
Since the functors $\bD^P$ and $C^{-1}_{\sX_{\pi,1}}$ are compatible with those over $X_1$, the projective representation 
$\bD^P\circ \IC_{\sX_{\pi,1}}(\mE\otimes_{\mO_{X_1}}\mO_{\sX_{\pi,1}})=\bD^P\circ\IC_{X_1}(\mE)$ is also not irreducible. Thus there exists a non-trivial quotient, which is the image of some nontrivial sub $\mL=L\otimes_{\mO_{X_1}}\mO_{\sX_{\pi,1}}$-twisted $f$-periodic Higgs-de Rham flow of $\mE\otimes_{\mO_{X_1}}\mO_{\sX_{\pi,1}}$
\[\left\{ (E',\theta')_{0},  
(V',\nabla',\Fil')_{0},
(E',\theta')_{1},  
(V',\nabla',\Fil')_{1},
\cdots\right\},\] 
 under the functor $\bD^P\circ\IC_{\sX_{\pi,1}}$ according Theorem~\ref{thm:functorD^P} and Theorem~\ref{thm:equivalent}. 
Since $E'_0$ is a sub bundle of $E_0\otimes_{\mO_{X_1}}\mO_{\sX_{\pi,1}}$, we have $1\leq \mathrm{rank}(E'_0) <\mathrm{rank}(E_0)$. 
By Theorem~4.17 in~\cite{OgVo07}, $\deg(E_{i+1})=p\deg(E_i) \text{ for }i\geq 0$
and 
$\deg(E_{0})=p\deg(E_{f-1})+\mathrm{rank}(E_{0})\times \deg(L)$.
Thus 
\begin{equation}
\frac{\deg (E_0)}{\mathrm{rank} (E_0)}=\frac{\deg (L)}{1-p^f}.
\end{equation}
Similarly, by Lemma~\ref{lem:deg&C^-1}, one gets 
\[\frac{\deg (E'_0)}{\mathrm{rank} (E'_0)}=\frac{\deg (\mL)}{1-p^f}.\]
Since $\deg(\mL)=\deg(L)$, one has $\deg(E_0)\cdot\mathrm{rank}(E_0')=\deg(E_0')\cdot\mathrm{rank}(E_0)$.
Since $\deg (E_0)$ and $\mathrm{rank} (E_0)$ are coprime, $\mathrm{rank}(E_0)\mid \mathrm{rank}(E'_0)$. This contradicts to $1\leq \mathrm{rank}(E'_0) <\mathrm{rank}(E_0)$. Thus the projective representation $\bD^P\circ\IC_{X_1}(\mE)$ is irreducible.
\end{proof}

\end{document}